\newcommand{\convas}{\stackrel{a.s.}{\longrightarrow}}
\newcommand{\convD}{\stackrel{D}{\longrightarrow}}
\newcommand{\convp}{\stackrel{p}{\longrightarrow}}
\newcommand{\sge}{\stackrel{st}{\ge}}
\newcommand{\eqD}{\stackrel{D}{=}}
\newcommand{\hfigwidth}{0.45\textwidth}
\newcommand{\figwidth}{\textwidth}
\newenvironment{proof}[1][Proof]
              {\par \normalfont
              \trivlist
             \item[
               \hspace{12pt}               \itshape #1{.}]\ignorespaces
        }{\hfill$\Box$ \endtrivlist}
\newtheorem{theorem}{Theorem}[section]
\newtheorem{lem}{Lemma}[section]
\newtheorem{cor}{Corollary}[section]
\numberwithin{equation}{section}
\begin{document}


\title{An epidemic in a dynamic population with importation of infectives}


\author{Frank Ball$^{1}$, Tom Britton$^{2}$ and Pieter Trapman$^{2}$}
\footnotetext[1]{University of Nottingham, School of Mathematical Sciences, University Park, Nottingham NG7 2RD, UK,}
\footnotetext[2]{Stockholm University, Department of Mathematics, 106 91 Stockholm, Sweden.}
\date{\today}
\maketitle

\begin{abstract}

Consider a large uniformly mixing dynamic population, which has constant birth rate and exponentially distributed lifetimes, with mean population size $n$. A Markovian SIR (susceptible $\to$ infective $\to$ recovered) infectious disease,  having importation of infectives, taking place in this population is analysed. The main situation treated is where $n\to\infty$, keeping the basic reproduction number $R_0$ as well as the importation rate of infectives fixed, but assuming that the quotient of the average infectious period and the average lifetime tends to 0 faster than $1/\log n$. It is shown that, as $ n \to \infty$, the behaviour of the  3-dimensional process describing the evolution of the fraction of the population that are susceptible, infective and recovered, is encapsulated
in a 1-dimensional regenerative process $S=\{ S(t);t\ge 0\}$ describing the limiting fraction of the population that are susceptible.  The process $S$ grows deterministically, except at one random time point per regenerative cycle, where it jumps down by a size that is completely determined by the waiting time since the previous jump.
Properties of the process $S$, including the jump size and stationary distributions, are determined.

\end{abstract}

\section{Introduction} \label{Sec_intro}
The mathematical theory for the spread of infectious diseases has a long history and is by now quite rich (e.g.,~\cite{DHB13}). One of the more common type of disease models is called SIR (susceptible $\to$ infective $\to$ recovered) meaning that individuals are at first Susceptible. If infected (by someone) they immediately become Infectious (being able to spread the disease onwards). After some time an infectious individual Recovers, which also means that the individual is immune to further infection from the disease. Such models were originally studied for populations assuming homogeneous mixing, but during the last few decades considerable effort has been put into analysing epidemic models in communities which are not homogeneously mixing but instead may be described using some type of social structure, such as a community of households (e.g.~\cite{BMS-T97}) or a random network describing possible contacts (e.g.~\cite{Newman02}). The vast majority of papers devoted to these type of problems assume a fixed community and community structure.

In the current paper we treat the situation where the population is dynamic in the sense that people die and new individuals are born, or more precisely immigrate into the population. Further, we assume that there is also importation of infectious individuals (randomly in time according to a homogeneous Poisson process), implying that the disease never vanishes forever. In order to facilitate analytical progress we consider only the case of a homogeneously mixing community, which in network terminology corresponds to treating the complete network.

Models for recurrent epidemics go back to the deterministic formulations of~\cite{Hamer} and~\cite{Soper}.  A stochastic treatment was given first in the pioneering work of~\cite{Bart56}, who considered an SIR model with importation of both susceptibles and infectives, but without disease-unrelated deaths.  An alternative model, with disease-unrelated deaths but no importation of infectives, has been studied extensively (e.g.~\cite{Nasell99} and the references therein).  Interest often centres on the time to extinction of infection and the closely-related problem of the critical community size for an infection to persist in a population.

We consider a Markovian SIR epidemic with demography and importation of infectives, in which infectious individuals infect new individuals at constant rate and the infectious period is exponentially distributed. We study limit properties of the epidemic when the average population size $n$ tends to infinity. Our focus lies on the case where the limit is taken keeping the basic reproduction number $R_0$ (i.e.~the average number of susceptibles infected by a single infective in an otherwise fully susceptible population of size $n$) and the immigration rate of infectives fixed, whereas the quotient of the average infectious period and the average lifetime tends to 0 faster than $1/\log n$. For many infectious diseases this quotient typically lies between $10^{-4}$ and $10^{-3}$, hence supporting this asymptotic regime, but in the discussion we treat other asymptotic regimes briefly.

Under the above asymptotic regime, all epidemic outbreaks are short, having duration that tends to $0$ in probability as $n \to \infty$.  Further, as $n \to \infty$, epidemic outbreaks are either minor, having size of order $o_p(n)$, or major, having size of exact order $\Theta_p(n)$.  It follows that, as $n \to \infty$,
the behaviour of the three-dimensional process describing the evolution of the fraction of the population that are susceptible, infective and recovered, is encapsulated
in a one-dimensional regenerative process $S=\{ S(t);t\ge 0\}$, describing the limiting fraction of the population that are susceptible.
During each cycle, the process $S$ makes one down jump, corresponding to the occurrence of a major outbreak, and except for this increases deterministically, as minor outbreaks have no effect on $\bar S^{(n)}$ in the limit as $n \to \infty$.
(Here, $\bar S^{(n)}=\{\bar S^{(n)}(t):t \ge 0\}$, where, for $t \ge 0$, $\bar S^{(n)}(t)=n^{-1}S^{(n)}(t)$ with $S^{(n)}(t)$ being the number of susceptible individuals in the population at time $t$.) Note that $\bar S^{(n)}$ does not converge weakly to $S$ in the Skorohod topology since the sample paths of $S$ are almost surely discontinuous but those of $\bar S^{(n)}$ almost surely contain only jumps of size $n^{-1}$, so are close to being continuous.  Thus to obtain rigorous convergence results, we consider two processes, $\bar S^{(n)}_-$ and $\bar S^{(n)}_+$, which coincide with
$\bar S^{(n)}$, except during major outbreaks during which they sandwich $\bar S^{(n)}$, and prove that both $\bar S^{(n)}_-$ and $\bar S^{(n)}_+$ converge weakly to $S$ in the Skorohod topology (Theorem~\ref{wconv}).  It then follows that certain functionals of $\bar S^{(n)}$ converge weakly to corresponding functionals of $S$ (Corollary~\ref{convfunct}).


The paper is structured as follows. In Section~\ref{modelmain}, we define the model and the limiting regenerative process, give an intuitive explanation of why $S$ approximates $\bar S^{(n)}$ for large $n$ and present the main convergence results. In Section~\ref{S-properties}, we derive some properties of the limiting regenerative process: the jump size distribution, the associated renewal time distribution and the stationary distribution. In Section~\ref{illustrations}, we present simulations supporting the convergence result and illustrating various features of the limiting process. In Section~\ref{proofs}, we prove the main results. We end in Section~\ref{disc} with a Discussion summarising our results and also exploring briefly additional questions, such as other asymptotic regimes.

\section{The epidemic model and main results}
\label{modelmain}

\subsection{The Markovian SIR epidemic with demography and importation of infectives}
\label{model}

We now define the Markovian SIR epidemic with demography and importation of infectives (SIR-D-I). We consider the process to be indexed by a target population size $n$, which we assume is a strictly positive constant. 
The population model is an immigration-death process with constant immigration rate and linear death rate. For $t \ge 0$, let $N^{(n)}(t)$ denote the population size at time $t$. Then $N^{(n)}(t)$ increases at constant rate $\mu n$ and decreases at rate $\mu N^{(n)}(t)$. The population size  hence fluctuates around $n$, which is assumed to be large.

The Markovian SIR-epidemic on this population is defined as follows. For $t \ge 0$, let $S^{(n)}(t),\ I^{(n)}(t)$ and $R^{(n)}(t)$ denote the number of susceptibles, infectives and recovered, respectively, at time $t$, so $S^{(n)}(t)+I^{(n)}(t)+R^{(n)}(t)=N^{(n)}(t)$. We assume that $I^{(n)}(0)=0$ and that $\bar S^{(n)}(0) \to s_0$ as $n \to \infty$, where $s_0 \in (0,1]$ is constant.  (The value of $R^{(n)}(0)$ has no effect on the ensuing epidemic.) . A fraction $\kappa_n$ of all births (i.e.~immigrants)  are infectives and the remaining births are all susceptibles, so births of infectives occur at rate $\mu n\kappa_n$ and births of susceptibles occur at rate $\mu n(1-\kappa_n)$. While infectious, any given infective infects any given susceptible at rate $n^{-1} \lambda_n$, independently between each distinct pair of individuals.  Thus, approximately, each infective makes infectious contacts at the points of a homogeneous Poisson process having rate $\lambda_n$, with contacts being with individuals chosen independently and uniformly from the whole population; a contact with a susceptible individual results in that individual becoming infected, while a contact with an infectious or removed individual has no effect.
Each infectious individual recovers and becomes immune at rate $\gamma_n$, implying that the infectious period is exponentially distributed with rate parameter $\gamma_n$.

More formally, the process $\left\{\left(S^{(n)}(t),I^{(n)}(t), R^{(n)}(t)\right):t \ge 0\right\}$ is a continuous-time Markov chain, with state space $\mathbb{Z}_+^3$ and transition intensities given by
\begin{align*}
q^{(n)}_{(s,i,r),(s+1,i,r)}&=(1-\kappa_n)n\mu,\\
q^{(n)}_{(s,i,r),(s,i+1,r)}&=\kappa_n n\mu,\\
q^{(n)}_{(s,i,r),(s-1,i,r)}&=\mu s,\\
q^{(n)}_{(s,i,r),(s,i-1,r)}&=\mu i,\\
q^{(n)}_{(s,i,r),(s,i,r-1)}&=\mu r,\\
q^{(n)}_{(s,i,r),(s-1,i+1,r)}&=n^{-1}\lambda_n s i,\\
q^{(n)}_{(s,i,r),(s,i-1,r+1)}&=\gamma_n i,
\end{align*}
corresponding to birth of a susceptible, birth of an infective, death of a susceptible, death of an infective, death of a recovered, infection of a susceptible and recovery of an infective, respectively.

We study specifically the case where the average population size $n$ tends to infinity in such a way that
\begin{itemize}
\item[(a)] the total importation rate  $\mu n\kappa_n$ of infectives tends to a strictly positive constant $\mu\kappa$, so $\kappa_n n\to \kappa$ as $n\to\infty$; and
\item[(b)] the infection and recovery rates satisfy $\lambda_n/\gamma_n\to R_0>1$ and \newline $\lambda_n/\log n\to \infty$ as $n\to\infty$.
\end{itemize}
For ease of exposition, we assume that $n$ is an integer, so sequences of epidemic processes are indexed by the natural numbers.  However, all of the results of the paper are easily generalised to the case of a family of epidemic processes indexed by the positive real numbers. 

To conclude, the parameters of the model are: $n$, the average population size; $\mu$, where $1/\mu$ is the average lifetime and $\mu n$ is the population birth rate; $\lambda_n$, the infection rate; $\gamma_n$, where $1/\gamma_n$ is the average length of the infectious period; and $\kappa_n$, the fraction of births which are infectious, so $\mu n\kappa_n$ is the birth (or importation) rate of infectives.

\subsection{The limiting process $S$}\label{S(t)}

Let $\bar S^{(n)}=\left\{\bar S^{(n)}(t):t \ge 0\right\}$, where $\bar S^{(n)}(t)=n^{-1}S^{(n)}(t)$ is the ``fraction" of the population that is susceptible at time $t$. The process $S=\{ S(t);t\ge 0\}$
can be viewed as the limit of $\bar S^{(n)}$ as $n \to \infty$ under the above asymptotic regime. It
is a Markovian regenerative process (e.g.~\cite{Asmussen87}, Chapter V), with renewals occurring whenever $S(t)=1/R_0$. Between each renewal $S(t)$ increases deterministically according to the differential equation
\begin{equation}
\label{Sdet}
S'(t)= \mu(1-S(t)),
\end{equation}
except for one down jump (from above $1/R_0$ to below $1/R_0$). This implies that
\begin{equation}
\label{Sbeforejump}
S(u)=1-(1-1/R_0)e^{-\mu u}
\end{equation}
before the jump (if $u$ denotes the time from the last renewal). The random time $T$ from a renewal to the jump has distribution specified by
\[
{\rm P}(T\le t)=1-\exp\left[-\mu\kappa \int_0^t\left(1-\frac{1}{R_0S(u)}\right)\,\mathrm{d}u\right]\quad(t \ge 0),
\]
with $S(u)$ given by~\eqref{Sbeforejump}, so
\begin{equation}
\label{FT}
{\rm P}(T\le t)= 1-{\rm e}^{-\mu \kappa t}\left(R_0{\rm e}^{\mu t}-R_0+1\right)^{\frac{\kappa}{R_0}}\quad(t \ge 0).
\end{equation}
The size of the jump is specified by the value $S(T-)$ of the process just prior to the jump.   More precisely, 
$S(T)=S(T-)(1-\tau(S(T-)))$, where for $s>R_0^{-1}$, $\tau(s)$ is the unique strictly positive solution to the equation (cf.~\cite{DHB13}, equation (3.15))
\begin{equation}
\label{tau}
1-\tau={\rm e}^{-R_0 s\tau}.
\end{equation}
In epidemic theory $\tau(s)$ is known as the relative fraction infected among the initially susceptible of an SIR epidemic outbreak in which a fraction $s$ are initially susceptible and the rest immune. Hence, the size of the down jump is $S(T-)\tau(S(T-))$. After the down jump, $S(t)$ increases deterministically according to the same differential equation~\eqref{Sdet} until the next renewal point, so
\[
S(T+t)=1-(1-S(T)){\rm e}^{-\mu t},\ 0\le t\le \mu^{-1}\log [(1-S(T))/(1-1/R_0)]
\]
and the inter-renewal time is $T + \mu^{-1}\log [(1-S(T))/(1-1/R_0)]$. Illustrations of $S$ are given in Section~\ref{illustrations}.

\subsection{Main results and heuristics}
\label{subsecmain}

We first explain heuristically why $S$ can be viewed as the limit of $\bar S^{(n)}$ as $n \to \infty$ under that asymptotic regime described in Section~\ref{model}.   Suppose that $n$ is large. Then when no infective is present, all that happens is that individuals die and new ones are born at approximately the same rate $\mu n$. Recovered (immune) individuals that die are replaced by susceptible individuals, so the fraction of susceptibles increases at rate $\mu (1-\bar{S}^{(n)}(t))$ which explains the deterministic growth rate of $S$.

After an exponentially distributed holding time, with rate parameter $\mu n\kappa_n\approx \mu \kappa$, an infective is born into the community. If the fraction susceptible $\bar{S}^{(n)}(t)$ is below $1/R_0$, then the effective reproduction number $R_e=R_0\bar{S}^{(n)}(t)$ is strictly less than one, implying that, with probability tending to one as $n \to \infty$, a large outbreak will not occur, so $\bar{S}^{(n)}(t)$ continues to grow approximately deterministically. If $\bar{S}^{(n)}(t)>1/R_0$ when a new born infective enters the community, then with approximate probability  $1-1/(R_0 \bar{S}^{(n)}(t))$ that infective gives rise to a major outbreak that infects order $\Theta(n)$ susceptibles (cf.~\cite{DHB13}, pages 53 and 376), otherwise only a minor outbreak, which infects order $o(n)$ susceptibles, occurs and $\bar{S}^{(n)}(t)$ continues to grow approximately deterministically.
This explains the distribution for $T$, the time from a renewal until a down jump in $S$, which has time varying intensity given by $\mu\kappa$ multiplied by the limiting major outbreak probability (cf.~\cite{Bart56}).

If a major outbreak takes place, the size of the outbreak among the susceptibles is given approximately by $\tau(S(T-)){S}^{(n)}(T-)$ where $S(T-)$ denotes the limiting (as $n\to \infty$) fraction susceptible just prior to the outbreak and $\tau (s)$ is defined above (cf.~\cite{DHB13}, page 60). The duration of such a major outbreak is of order $\Theta(\log n/\lambda_n)$ (cf.~\cite{Barbour75}) which tends to $0$ by assumption. Thus, if there is a major outbreak it happens momentarily and, in the limit as $n \to \infty$, the fraction susceptible after the outbreak, $S(T)$, satisfies $S(T)=S(T-)(1-\tau (S(T-))$.

Although the above heuristic argument makes it plausible that the normalised susceptible process $\bar{S}^{(n)}$ converges to the regenerative process $S$, there are two complicating factors in making the argument fully rigorous.
First, as explained in Section~\ref{Sec_intro}, it is not true that $\bar S^{(n)} \Rightarrow S$ as $ n \to \infty$, where $\Rightarrow$ denotes weak convergence in the space $D[0,\infty)$ of right-continuous functions $f:[0,\infty) \to \mathbb{R}$ having limits from the left (i.e.~c\`{a}dl\`{a}g functions), endowed with the Skorohod metric (e.g.~\cite{EK86}, Chapter 3). As explained also in Section~\ref{Sec_intro}, we overcome this problem by considering two processes, $\bar S^{(n)}_-$ and $\bar S^{(n)}_+$, which coincide with $\bar S^{(n)}$ except during major outbreaks, when they sandwich $\bar S^{(n)}$, and show that $\bar S^{(n)}_-\Rightarrow S$ and $\bar S^{(n)}_+ \Rightarrow S(\cdot)$ as $n \to \infty$; see Theorem~\ref{wconv}.
The second complicating factor is that the results referred to above concerning the probability, size and duration of a major outbreak are for an epidemic in a static
population, whereas our population is dynamic.  The results carry over to our setting because, in the limit as $n \to \infty$, the time scale of an epidemic outbreak is infinitely faster than that of demographic change, but proofs need to be adapted accordingly.

Before stating our main theorem, some more notation is required.
Recall that $I^{(n)}(t)$ is the number of infectives at time $t$ in the SIR-D-I epidemic with average population size $n$ and that we consider epidemics with no infective at time $0$, i.e.~with $I^{(n)}(0)=0$.  Let $t^{(n)}_0=u^{(n)}_0=0$.  For $k=1,2,\cdots$, let $t^{(n)}_k=\inf\{t\ge u^{(n)}_{k-1}:I^{(n)}(t) \ge \log n\}$ and $u^{(n)}_k=\inf\{t\ge t^{(n)}_{k}:I^{(n)}(t)=0\}$.  Thus, provided $n$ is sufficiently large, the $k$th major outbreak starts at approximately time $t^{(n)}_k$ and ends at time  $u^{(n)}_k$.  (The choice of $\log n$ to delineate major outbreaks is essentially arbitrary.  Our proofs work equally well if $\log n$ is replaced by any function $g(n)$ which satisfies
$g(n) \to \infty$ and $n^{-\frac{1}{2}} g(n) \to 0$ as $n \to \infty$.)
For $t \ge 0$, let
\[
\bar S^{(n)}_-(t)  = \left\{
  \begin{array}{l l}
   \bar S^{(n)}(t) & \quad \text{if } t \notin [t^{(n)}_i,u^{(n)}_i) \text{ for some }i,\\
   \min_{t^{(n)}_i \le t' \le u^{(n)}_i} \bar S^{(n)}(t') & \quad \text{if } t \in [t^{(n)}_i,u^{(n)}_i),i=1,2,\cdots,
  \end{array} \right.
\]
and
\[
\bar S^{(n)}_+(t)  = \left\{
  \begin{array}{l l}
   \bar S^{(n)}(t) & \quad \text{if } t \notin [t^{(n)}_i,u^{(n)}_i) \text{ for some }i,\\
   \max_{t^{(n)}_i \le t' \le u^{(n)}_i} \bar S^{(n)}(t') & \quad \text{if } t \in [t^{(n)}_i,u^{(n)}_i),i=1,2,\cdots.
  \end{array} \right.
\]

The following theorem is proved in Section~\ref{proof}.

\begin{theorem}
\label{wconv}
Suppose that $\lim_{n \to \infty} \bar S^{(n)}(0)=s_0$.  Then, as $n \to \infty$,
\[
\bar S^{(n)}_- \Rightarrow  S \quad \mbox{and} \quad  \bar S^{(n)}_+ \Rightarrow  S,
\]
where $S(0)=s_0$.
\end{theorem}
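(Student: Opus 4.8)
The plan is to establish the two weak convergence statements by a coupling/sandwiching argument that separates the slow demographic dynamics from the fast epidemic dynamics. First I would set up a coupling of the SIR-D-I process for each $n$ with the limiting process $S$ on a common probability space, driven by the same Poisson process of infective importations (with rate $\mu n \kappa_n \to \mu\kappa$). Between importation events, when $I^{(n)}(t)=0$, the susceptible fraction $\bar S^{(n)}$ is governed by a pure immigration–death mechanism on susceptibles and recovereds; a standard functional law of large numbers (e.g.\ Kurtz-type / Ethier--Kurtz Chapter 11) for density-dependent Markov chains gives that, uniformly on compact time intervals, $\bar S^{(n)}$ tracks the solution of \eqref{Sdet} with error $O_p(n^{-1/2})$. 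So away from outbreaks the two processes stay uniformly close.

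The core of the argument is to control what happens at each imported infective. When an infective is imported at a time when $\bar S^{(n)}(t) = s$, I would compare the early stages of the epidemic to a branching process with offspring mean $R_0 s$ (more precisely a linear birth–death process with birth rate $\lambda_n s$ and death rate $\gamma_n + \mu$, but the $\mu$ is negligible): with probability tending to $\max(0, 1 - 1/(R_0 s))$ the number of infectives reaches $\log n$ (a major outbreak), and otherwise it dies out having infected only $o_p(n)$ susceptibles (a minor outbreak). For a major outbreak I would invoke the static-population results on final size and duration (cited from \cite{DHB13}, \cite{Barbour75}): conditionally on reaching size $\log n$, the outbreak infects $(\tau(s) + o_p(1)) s n$ susceptibles and lasts time $\Theta_p(\log n / \lambda_n) = o_p(1)$ by assumption (b). The adaptation to the dynamic population is justified by a time-scale separation: on the $o_p(1)$ duration of an outbreak, the number of births, deaths and importations is $O_p(\lambda_n^{-1} \log n \cdot n) = o_p(n)$ — here assumption (b), $\lambda_n/\log n \to \infty$, is exactly what is needed — so demography does not perturb the outbreak's final size or duration at leading order, and the susceptible fraction just before the outbreak is what drives $\tau$. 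This shows that, on the coupling, the times $t^{(n)}_k$ converge to the successive jump times of $S$, the jump sizes converge to $S(T-)\tau(S(T-))$, and between jumps $\bar S^{(n)}$ stays uniformly close to $S$.

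Finally I would assemble these pieces. Since $\bar S^{(n)}$ and $S$ can differ substantially only during the intervals $[t^{(n)}_i, u^{(n)}_i)$ — whose lengths tend to $0$ — and since $\bar S^{(n)}_-$ and $\bar S^{(n)}_+$ freeze $\bar S^{(n)}$ at its minimum and maximum over each such interval, on the coupling $\bar S^{(n)}_-$ and $\bar S^{(n)}_+$ are eventually within $o_p(1)$ of $S$ uniformly on any compact interval in the Skorohod metric: the small mismatch in jump locations is absorbed by the time-deformation allowed in the Skorohod topology, and the modified processes have no spurious within-outbreak excursions. An induction over successive regenerative cycles, using the strong Markov property at the renewal times $S(t)=1/R_0$ and at the $u^{(n)}_k$, extends this from one cycle to all of $[0,\infty)$; continuity of the flow \eqref{Sdet} in its initial condition handles the propagation of the small errors. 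Weak convergence then follows, for instance via the Skorohod representation theorem run in reverse (almost sure convergence on the coupling implies weak convergence).

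The main obstacle I anticipate is making the time-scale-separation heuristic fully rigorous: one must show that, conditionally on a major outbreak occurring at susceptible fraction close to $s$, the final size is $(\tau(s)+o_p(1))sn$ \emph{in the dynamic population}, i.e.\ that births, deaths and further importations during the $O_p(\log n/\lambda_n)$ outbreak window are genuinely negligible, and also that a minor outbreak truly leaves $\bar S^{(n)}$ unchanged at the $n^{-1}$-ignorable level. This requires careful coupling of the dynamic-population epidemic with a static-population SIR epidemic (plus branching-process bounds for the early and late phases), with explicit control of the discrepancy in terms of $n$ and $\lambda_n$ — essentially re-deriving the classical major-outbreak size/duration estimates with demographic perturbation terms carried along — and this is where assumption (b) enters decisively.
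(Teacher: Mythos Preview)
Your proposal is correct and follows essentially the same approach as the paper: condition on the Poisson process of importation times, use a density-dependent law of large numbers between outbreaks, birth--death process sandwiching for the outbreak-initiation probabilities and early/late phases, a Kurtz-type fluid limit for the bulk of a major outbreak, and then assemble via explicit Skorohod time-changes and induction over successive outbreaks. The paper organises the deconditioning slightly differently (proving conditional weak convergence given $\eta$ first, then integrating out via dominated convergence rather than a direct coupling-to-weak-convergence step), and inducts over outbreaks rather than full regenerative cycles, but these are minor structural differences and your identification of the key technical obstacle---carrying the static-population final-size and duration estimates through the demographic perturbation---matches exactly where the paper expends most of its effort (their Lemmas on minor and major outbreaks and the supporting birth--death estimates).
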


An immediate consequence of Theorem~\ref{wconv} is that suitable functionals of $\bar S^{(n)}$ converge weakly to corresponding functionals of $S$.  For $g,h \in D[0,\infty)$, let $g \le h$ denote $g(t) \le h(t)$ for all $t\ge 0$.  A functional $H:D[0,\infty) \to \mathbb{R}$ is called monotone if either $Hf\le Hg$ for all $f,g \in D[0,\infty)$ satisfying $f \le g$, or
$Hf\le Hg$ for all $f,g \in D[0,\infty)$ satisfying $g \le f$.  The following corollary, which can clearly be generalised to suitable non-real-valued functionals, follows immediately from Theorem~\ref{wconv} by using the continuous mapping theorem (e.g.~\cite{Bill}).  For $H:D[0,\infty) \to \mathbb{R}$ , let
$C_H=\{f \in D[0,\infty): H \mbox{ is continuous at } f\}$.

\begin{cor}
\label{convfunct}
Suppose that $\lim_{n \to \infty} \bar S^{(n)}(0)=s_0$,
$H:D[0,\infty) \to \mathbb{R}$ is monotone and ${\rm P}(S \in C_H)=1$. Then
\[
HS^{(n)} \convD HS \quad \text{as} \quad n \to \infty,
\]
where $S(0)=s_0$.
\end{cor}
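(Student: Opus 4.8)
The plan is to deduce the corollary from Theorem~\ref{wconv} together with the sandwiching inequalities $\bar S^{(n)}_- \le \bar S^{(n)} \le \bar S^{(n)}_+$, which hold pathwise by construction (during a major outbreak interval $[t^{(n)}_i,u^{(n)}_i)$ the lower process is the running minimum and the upper process the running maximum of $\bar S^{(n)}$ over that interval, and outside such intervals all three agree). Suppose first that $H$ is monotone in the increasing sense, i.e.\ $Hf \le Hg$ whenever $f \le g$. Then from the pathwise ordering we get, for every $n$,
\[
H\bar S^{(n)}_- \le H\bar S^{(n)} \le H\bar S^{(n)}_+ .
\]
By Theorem~\ref{wconv}, $\bar S^{(n)}_- \Rightarrow S$ and $\bar S^{(n)}_+ \Rightarrow S$ in $D[0,\infty)$. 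Since $H$ is continuous on $C_H$ and ${\rm P}(S \in C_H)=1$, the continuous mapping theorem (e.g.~\cite{Bill}) yields $H\bar S^{(n)}_- \convD HS$ and $H\bar S^{(n)}_+ \convD HS$ as $n \to \infty$. Thus the real-valued sequence $H\bar S^{(n)}$ is squeezed between two sequences converging in distribution to the same limit $HS$.

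The remaining step is the elementary ``squeeze'' lemma for convergence in distribution: if $X_n \le Y_n \le Z_n$ for all $n$ and $X_n \convD W$, $Z_n \convD W$, then $Y_n \convD W$. I would prove this directly via distribution functions. Let $x$ be a continuity point of the (by hypothesis common) limit law of $W$. From $Y_n \le Z_n$ we have ${\rm P}(Y_n \le x) \ge {\rm P}(Z_n \le x)$, and from $X_n \le Y_n$ we have ${\rm P}(Y_n \le x) \le {\rm P}(X_n \le x)$; hence
\[
{\rm P}(Z_n \le x) \le {\rm P}(Y_n \le x) \le {\rm P}(X_n \le x),
\]
and letting $n \to \infty$ both bounds tend to ${\rm P}(W \le x)$, so ${\rm P}(Y_n \le x) \to {\rm P}(W \le x)$. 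Since this holds at every continuity point of the limit, $Y_n \convD W$. Applying this with $X_n = H\bar S^{(n)}_-$, $Y_n = H\bar S^{(n)}$, $Z_n = H\bar S^{(n)}_+$ and $W = HS$ gives $H\bar S^{(n)} \convD HS$. (In the notation of the statement, $HS^{(n)}$ should be read as $H\bar S^{(n)}$.) If instead $H$ is monotone in the decreasing sense, the roles of $\bar S^{(n)}_-$ and $\bar S^{(n)}_+$ are simply interchanged and the same argument applies. $\Box$

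There is no real obstacle here: every ingredient is either a hypothesis, Theorem~\ref{wconv}, the pathwise sandwich, the continuous mapping theorem, or the one-line distribution-function squeeze. The only point needing a word of care is that the continuous mapping theorem is being invoked at the almost sure continuity set of the \emph{limit} process $S$ (this is exactly why the hypothesis ${\rm P}(S \in C_H)=1$ appears), and that $C_H$ is a measurable set — this is standard, as $C_H$ is the complement of the set of discontinuity points of a measurable map and hence Borel.
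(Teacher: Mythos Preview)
Your proof is correct and is precisely the argument the paper has in mind: the paper's ``proof'' is the single sentence that the corollary follows immediately from Theorem~\ref{wconv} via the continuous mapping theorem, and your proposal spells out the implicit steps (pathwise sandwich $\bar S^{(n)}_-\le \bar S^{(n)}\le \bar S^{(n)}_+$, monotonicity of $H$, continuous mapping applied to each bounding process, and the distribution-function squeeze). Your observation that $HS^{(n)}$ in the statement should be read as $H\bar S^{(n)}$ is also correct.
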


One functional which satisfies the conditions of Corollary~\ref{convfunct} is the first passage time functional $H_a$, defined for given $a \in (0,1)$ by
\[
H_af  = \left\{
  \begin{array}{l l}
   \inf\{t\ge 0: f(t) \ge a\} & \quad \text{if } f(0) \le a,\\
   \inf\{t\ge 0: f(t) \le a\} & \quad \text{if } f(0) > a .
  \end{array} \right.
\]
The functional $H_a$ is clearly monotone and ${\rm P}\left(S \in C_{H_a}\right)=1$, cf.~\cite{Pollard84}, page 124.

Another functional satisfying the conditions of Corollary~\ref{convfunct} is the occupancy time functional $H_{t^*}^a$,
defined for any given $t^*>0$ and $ a \in (0,1)$ by 
\begin{equation}
\label{occfunct}
H_{t^*}^af=\int_0^{t^*} 1_{\{f(t) \le a\}} \,\mathrm{d}t.
\end{equation}
This functional is again clearly monotone.  The proof that ${\rm P}\left(S \in C_{H_{t^*}^a}\right)=1$ is given at the end of Section~\ref{proof}.


\section{Properties of the limiting process $S$}\label{S-properties}

We now outline some properties of the regenerative process $S$ which can be obtained from renewal and regenerative process theory (e.g.~\cite{Asmussen87}, Chapters IV and V). As described in Section~\ref{S(t)} the stochastic part of the regenerative process is completely specified by the waiting time $T$ until the down jump, but it can be specified equivalently by the jump size $X=S(T-)-S(T)$. Noting that $\tau(S(T-))=(S(T-) -S(T))/S(T-)$), it follows from~\eqref{tau} that
\[
\frac{S(T)}{S(T-)}={\rm e}^{-R_0(S(T-)-S(T))}={\rm e}^{-R_0 X},
\]
whence
\[
S(T-)=\frac{X}{1-e^{-R_0X}}=\frac{X{\rm e}^{R_0X}}{e^{R_0X}-1}\qquad\text{and}\qquad S(T)=\frac{X}{e^{R_0X}-1},
\]
 which can be used to obtain the distribution of the jump size $X$.  The jump size is strictly less than $\tau(1)$, as $S(t)<1$ for all $t \ge 0$. Hence, for $0 < x <\tau(1)$,
\begin{align}
\label{FX}
 F_X(x)& ={\rm P}(X\le x) \nonumber
\\
 &={\rm P}\left(S(T-)\le \frac{x}{1-{\rm e}^{-R_0x}}\right)\nonumber
\\
&= {\rm P}\left( T\le -\mu^{-1}\log\left[ \frac{1-x/(1-{\rm e}^{-R_0x})}{1-1/R_0}\right] \right) \quad(\mbox{using}~\eqref{Sbeforejump})\nonumber
\\
&=1-\left[\frac{R_0\left(1-x-{\rm e}^{-R_0 x}\right)}{\left(R_0-1\right)\left(1-{\rm e}^{-R_0 x}\right)}\right]^\kappa
\left[\frac{\left(R_0-1\right)x}{1-x-{\rm e}^{-R_0 x}}\right]^{\frac{\kappa}{R_0}}.
\end{align}

The lifetime distribution for the renewal process describing successive visits of $S$ to $1/R_0$
may be derived as follows. During a cycle, the regenerative process $S$ starts at $1/R_0$ and grows deterministically, according to~\eqref{Sdet}, until the time $T$ of the down jump. After this down jump it again grows deterministically, according to~\eqref{Sdet}, until it reaches $1/R_0$, when the next renewal occurs. If we change the order of these two parts, the process starts at $S(T)$ and grows deterministically until it reaches $S(T-)$. The lifetime $T^*$ hence equals the time it takes for the deterministic curve defined by~\eqref{Sdet} to travel from $S(T)$ to $S(T-)$  This time equals
$$
T^*=\mu^{-1}\log\left( \frac{1-S(T)}{1-S(T-)}\right) = \mu^{-1}\log\left( \frac{{\rm e}^{R_0X}-1-X}{(1-X){\rm e}^{R_0X}-1}\right) .
$$
This is a monotonic increasing function of $X$, so the renewal time distribution can be obtained numerically using the expression $F_X(x)$ given by~\eqref{FX}.

The stationary distribution of $S$ can be obtained using regenerative process theory (e.g.~\cite{Asmussen87}, Chapter V, Section 3). During a regenerative cycle, the process $S$ traverses $s$ if and only if $s$ lies between $S(T)$ and $S(T-)$. If it does, the density for the time spent there is inversely proportional to the derivative $\mu(1-s)$.
Consequently, if we let $f_{S^*}(s)$ denote the density of the stationary distribution of $S$, we have
\begin{equation}
f_{S^*}(s) =\frac{c}{\mu(1-s)}{\rm P}(s\in [S(T), S(T-)])\quad(1-\tau(1)<s<1),
\end{equation}
where $c$ ($=1/{\rm E}[T^*]$) is the normalizing constant making this a pdf. If $s \in [1/R_0,1)$, then
$s\in [S(T), S(T-)]$ if and only if $T \ge \mu^{-1}\log((1-R_0^{-1})/(1-s)$.  If $s \in (1-\tau(1),1/R_0)$, then $s\in [S(T), S(T-)]$ if and only if $ X \ge g^{-1}(s)$, where $g:(0,\tau(1)) \to (1-\tau(1),1/R_0)$ is defined by $g(x)=x/({\rm e}^{R_0 x}-1)$. It then follows using~\eqref{FT} and~\eqref{FX} that, with $\tilde{s}= g^{-1}(s)$,
\[
f_{S^*}(s)=\left\{
  \begin{array}{l l}
   c \left[\frac{R_0\left(1-\tilde{s}-{\rm e}^{-R_0 \tilde{s}}\right)}{\left(R_0-1\right)\left(1-{\rm e}^{-R_0 }\tilde{s}\right)}\right]^\kappa
\left[\frac{\left(R_0-1\right)\tilde{s}}{1-\tilde{s}-{\rm e}^{-R_0 }\tilde{s}}\right]^{\frac{\kappa}{R_0}}&\quad \text{if }  1-\tau(1) <s<1/R_0,\\
   c\left(\frac{1-s}{R_0-1}\right)^{\kappa\left(1-\frac{1}{R_0}\right)}R_0^\kappa s^{\frac{\kappa}{R_0}} & \quad \text{if }  1/R_0 \le s < 1.
  \end{array} \right.
\]
In the next section the density $f_{S^*}(s)$ is calculated numerically and shown to agree with corresponding empirical values from simulations.

\section{Numerical illustrations}\label{illustrations}

We now present briefly some numerical and simulation results, which  illustrate convergence of the epidemic
process as well as properties of the limiting stationary distribution of the fraction susceptible $S^*$. In Figure \ref{epidsimul} the epidemic is simulated for 100 years in a population of $n=10,000$ individuals. In all figures, $R_0=2$ implying that the effective reproduction number $R_e=R_0\bar S^{(n)}(t)$ is supercritical as soon as the population fraction susceptible exceeds $1/R_0=0.5$. The average lifetime is $1/\mu=75$ years and
$\gamma=50$, so the average length of the infectious period is about 1 week. In the left panels of Figure~\ref{epidsimul},
$\kappa=20$, so the rate at which new infectives enter the population ($\mu\kappa$) equals 1 per 3.75 years, and in the right panels $\kappa=200$, so new infectives enter the population at rate $2\frac{2}{3}$ per year.
The upper panels show the fraction of the population that is susceptible over the $100$ period and the lower panels show the corresponding fraction that is infective.  Observe that when $\kappa=20$ major outbreaks are less frequent but larger than when $\kappa=200$, and that there are appreciably more minor outbreaks when $\kappa=100$. Note also that epidemics are rarer than the importation rate of infectives suggests, for two reasons. First, major outbreaks can occur only when $\bar S^{(n)}(t)>1/R_0=0.5$, and secondly, when $\bar S^{(n)}(t)$ is above this threshold, major outbreaks do not occur each time an infective enters the community. In the lower left panel of Figure \ref{epidsimul} some minor outbreaks caused by importation of infectives can also be seen.
\begin{figure}
\begin{center}
\resizebox{\hfigwidth}{!}{\includegraphics{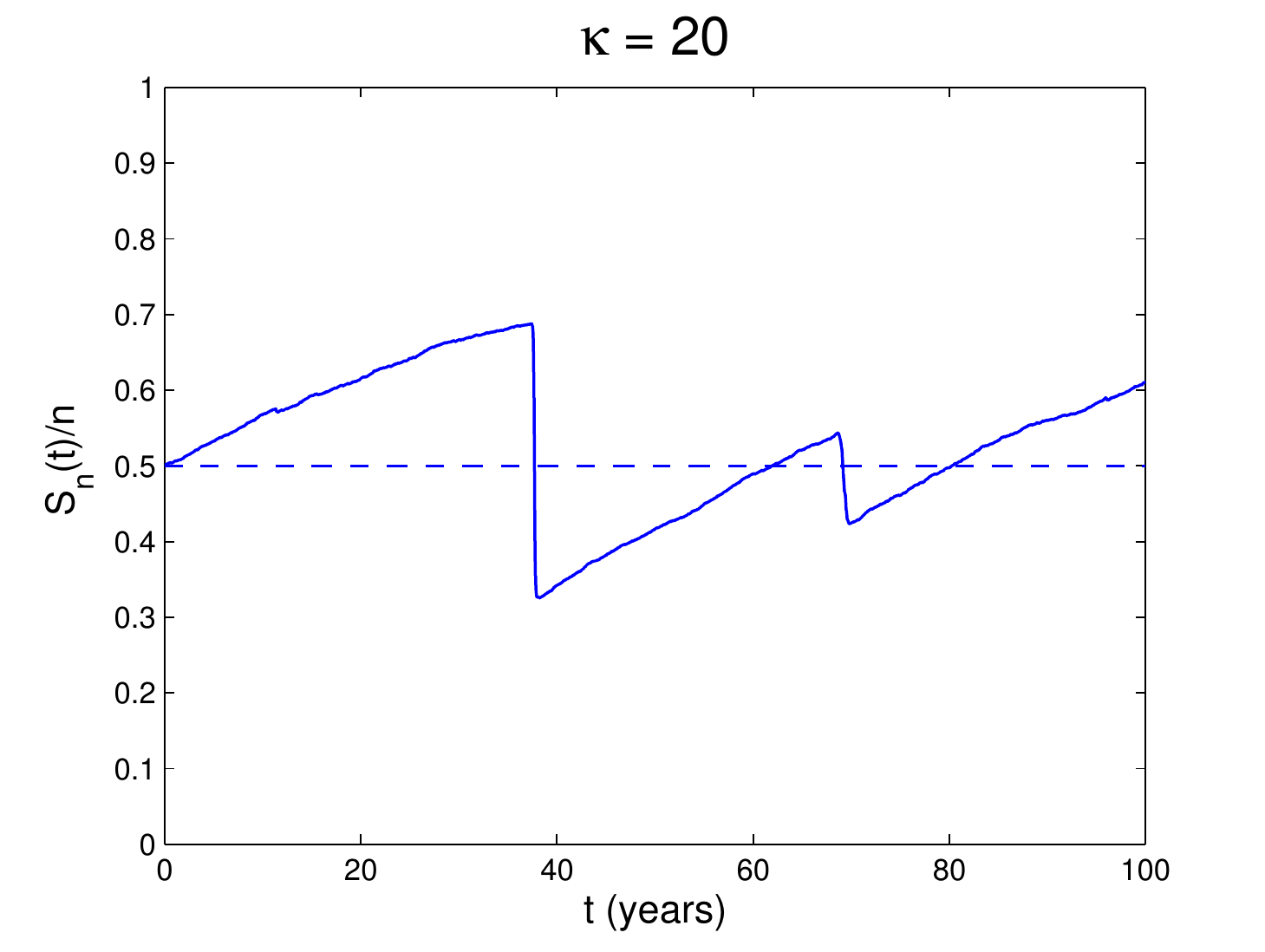}}
\resizebox{\hfigwidth}{!}{\includegraphics{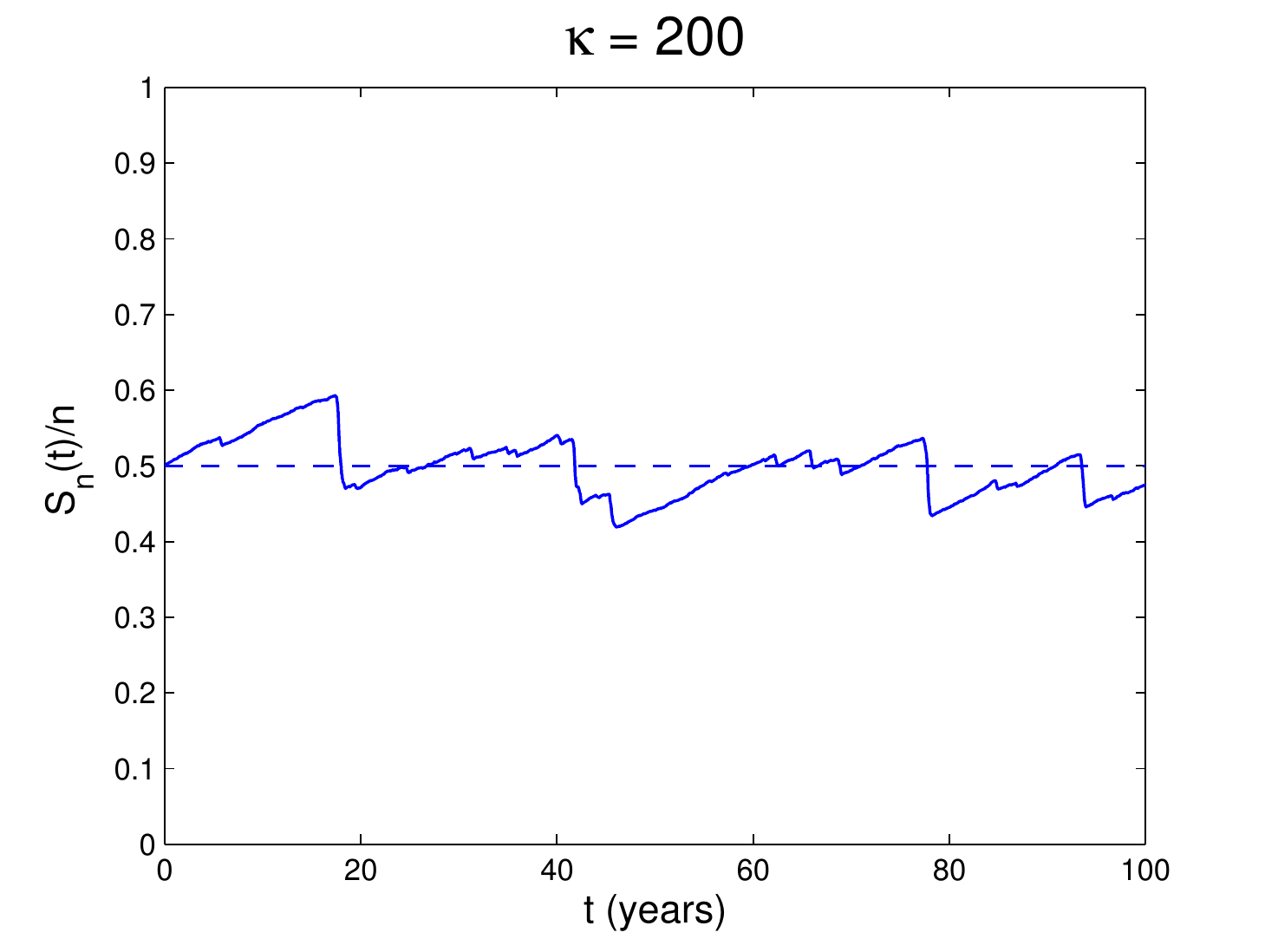}}\\
\resizebox{\hfigwidth}{!}{\includegraphics{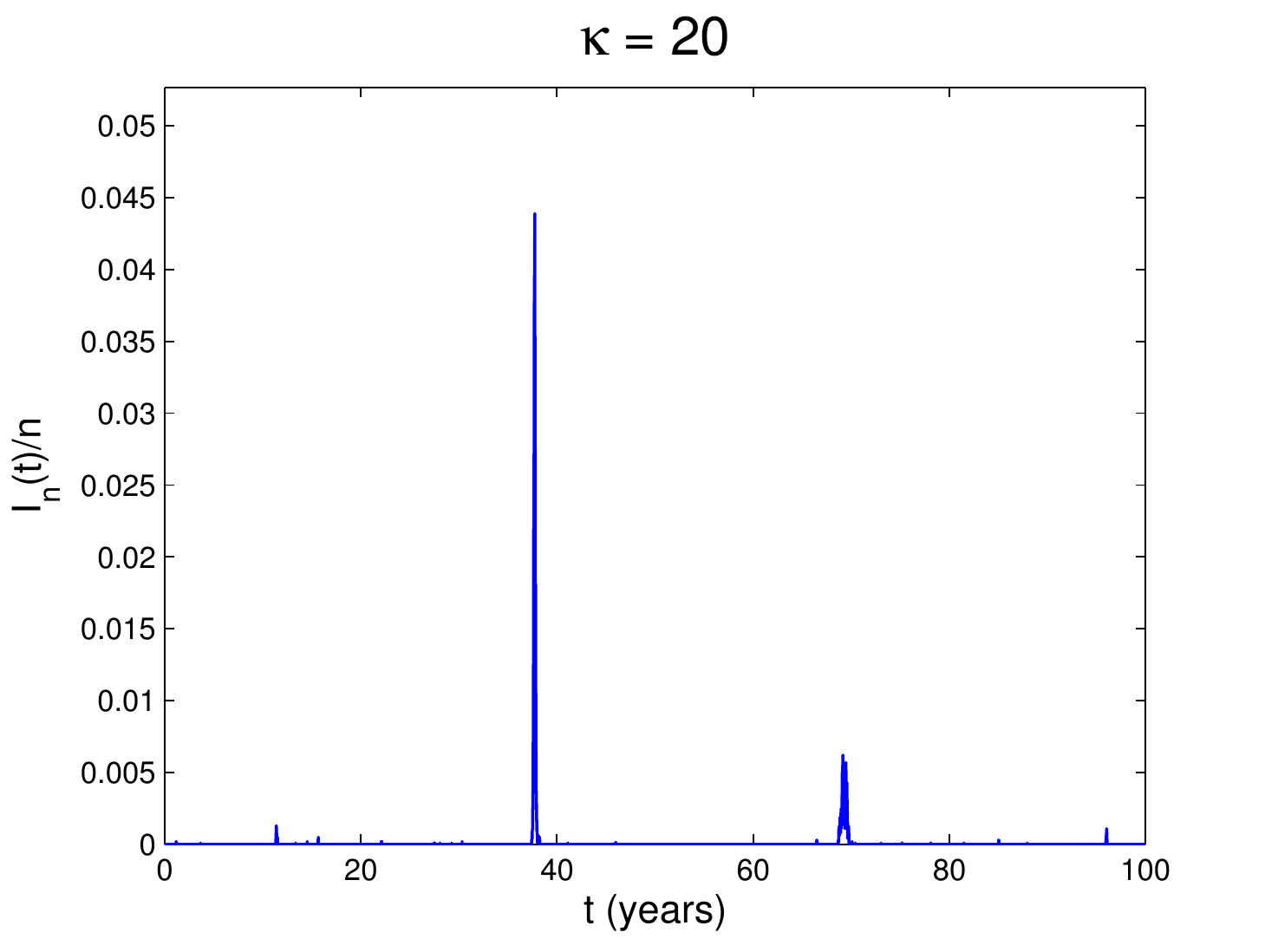}}
\resizebox{\hfigwidth}{!}{\includegraphics{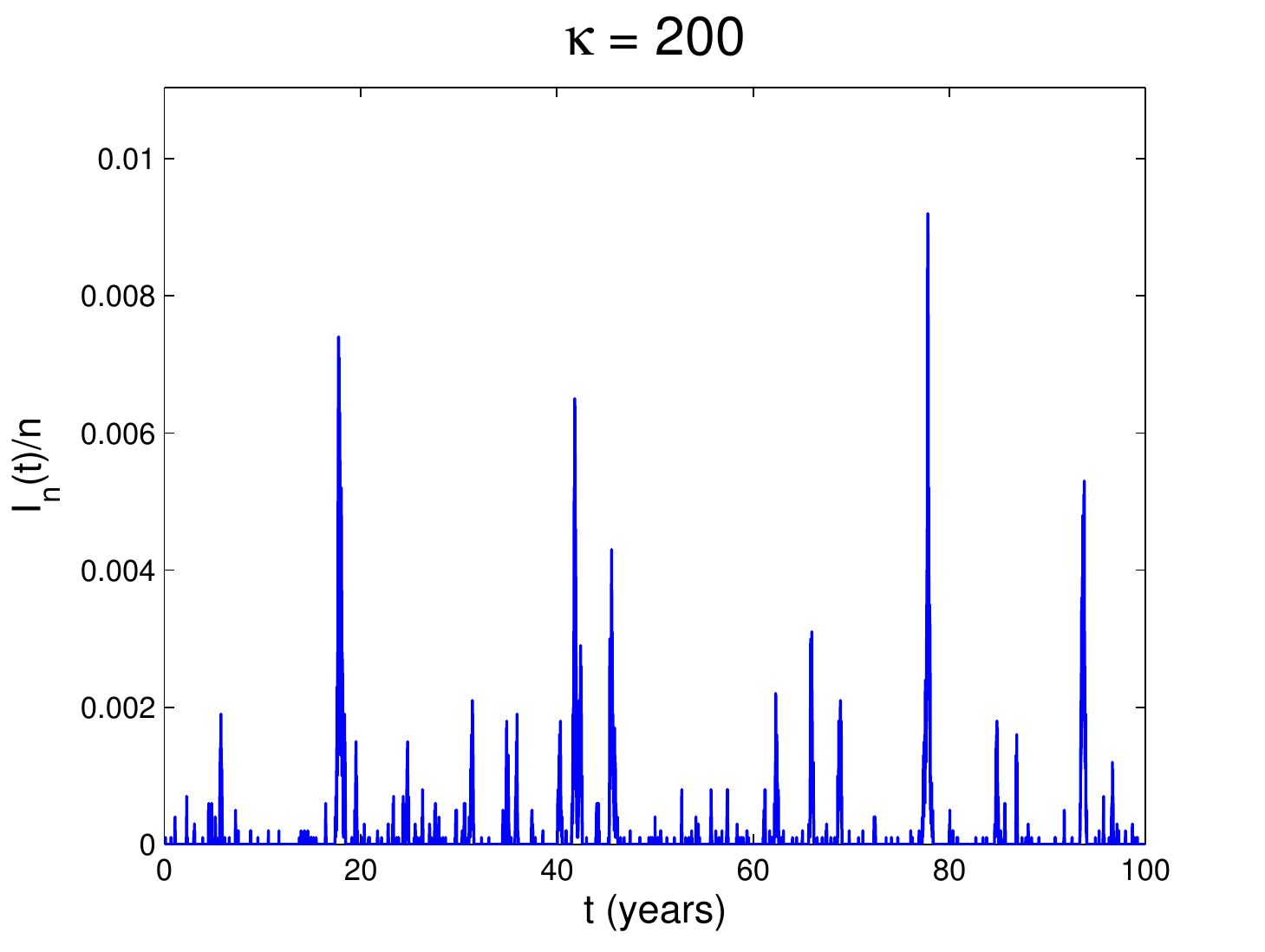}}
\end{center}
\caption{Simulation of the SIR-D-I epidemic with $n=10,000$ individuals, $R_0=2$. In the left panels $\kappa=20$ and $\kappa=200$ in the right panels. The average life length is $1/\mu=75$ years and mean infectious period is $1/\gamma \approx$ 1 week. The fraction of the population susceptible (upper panels) and infective (lower panels) is plotted over a 100 year period in both cases. The dashed line in the upper panels shows the critical fraction susceptible so that the effective reproduction number $R_e=1$.  Note that the scales for the fraction of the population infective are different in the two lower panels; major outbreaks are appreciably larger in the left figure.}
\label{epidsimul}
\end{figure}

In Figure~\ref{limitingsimul} realisations of the corresponding limiting processes are plotted. The same parameter values are used in both figures. The stochastic features of the epidemic and the limiting process are in agreement, suggesting that the limiting behaviour has kicked in when $n=10,000$.  Note that, unlike in Figure~\ref{epidsimul}, there are no near-vertical lines as outbreaks are now instantaneous.
\begin{figure}
\begin{center}
\resizebox{\hfigwidth}{!}{\includegraphics{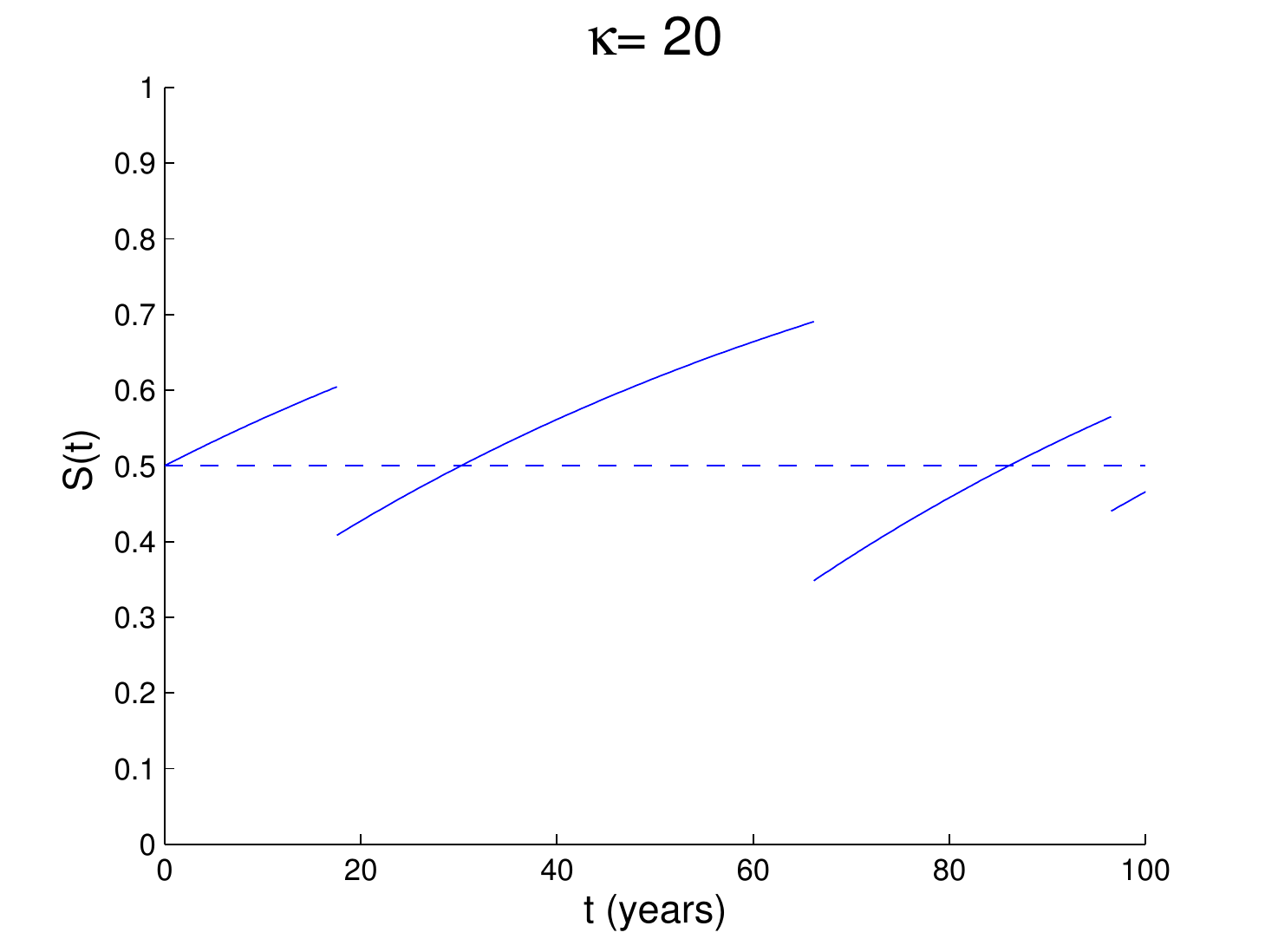}}
\resizebox{\hfigwidth}{!}{\includegraphics{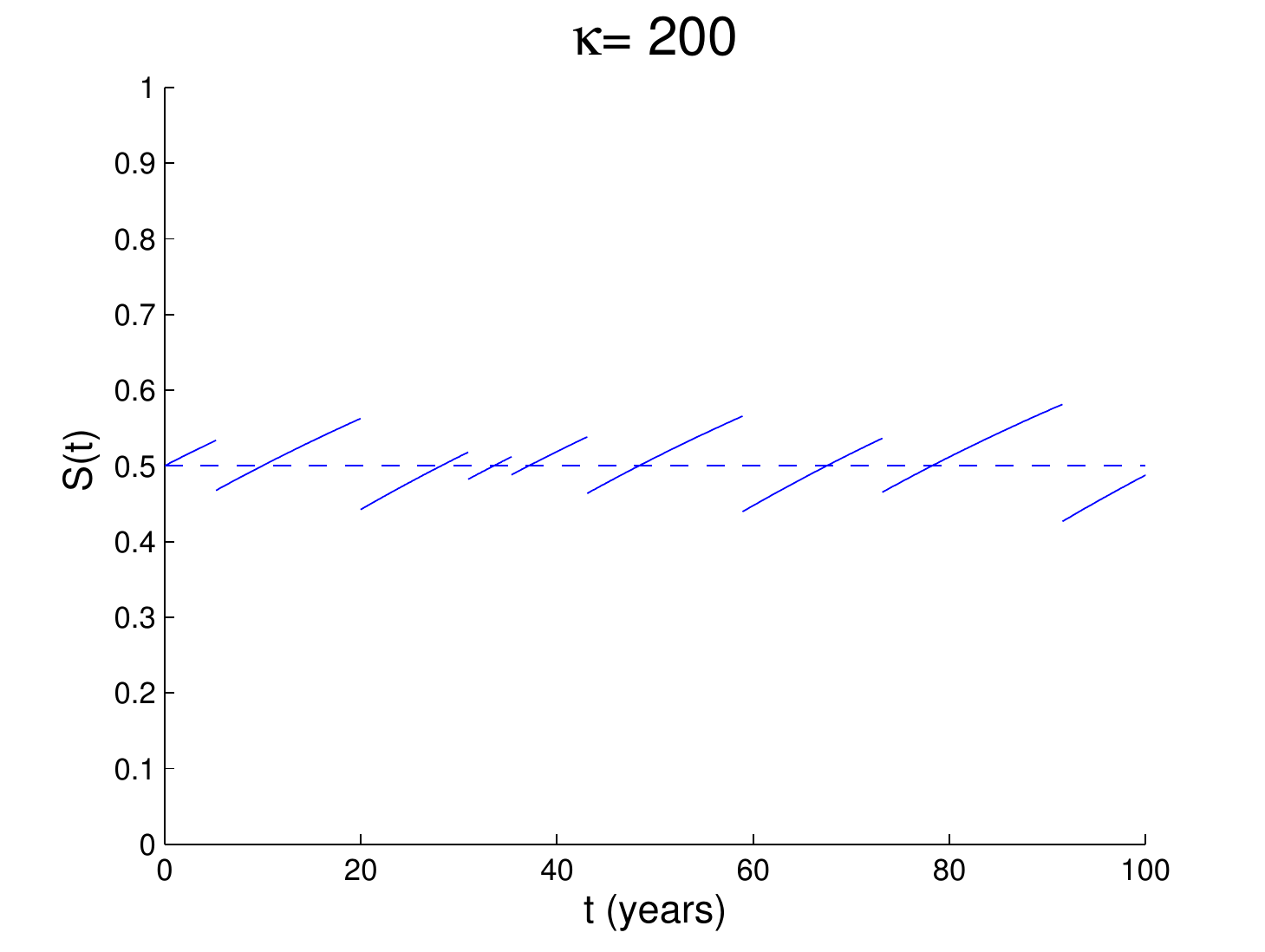}}
\end{center}
\caption{Simulation of the limiting process $S$ for the same parameter values as in the epidemics in Figure~\ref{epidsimul}.}
\label{limitingsimul}
\end{figure}

We now illustrate properties of the stationary distribution of the fraction susceptible $S^*$, both for the epidemic with $n=1,000$ and $n=10,000$, as well as for the limiting process. For the three processes, and for three different values of $\kappa$, we simulate the epidemic and limiting processes for 10,000 years and in Figure \ref{stationary-simul} we plot bar charts of the relative time spent with specified fraction susceptible.
The processes are simulated over a very long time span so that the empirical distribution of the fraction susceptible is close to the corresponding stationary distribution.  (Recalling the functional $H_{t^*}^a$ defined at the end of Section~\ref{subsecmain}, note that by standard regenerative process theory, for any fixed $a \in (0,1)$, $\frac{1}{t^*}H_{t^*}^a S \convas {\rm P}(S^* \le a)$ as $t^* \to \infty$ and, by Corollary~\ref{convfunct}, $\frac{1}{t^*}H_{t^*}^a \bar S^{(n)} \convD \frac{1}{t^*}H_{t^*}^a S$ as $n \to \infty$.)
The values of $\mu, \gamma$ and $R_0$ are the same as in Figure~\ref{epidsimul}.  (Note that the value of $\gamma$, and hence also $\lambda$ $(=R_0 \gamma)$, is the same for both values of $n$.)
The chosen values of $\kappa$ are $\kappa=1, \ 3$ and $100$, corresponding to importation of infectious individuals on average one every 75, 25 and $0.75$ years, respectively. In the plots we have also computed $f_{S^*}(s)$, the stationary distribution of the limiting process, numerically as described in Section~\ref{S-properties}.
\begin{figure}
\begin{center}
\resizebox{\figwidth}{!}{\includegraphics{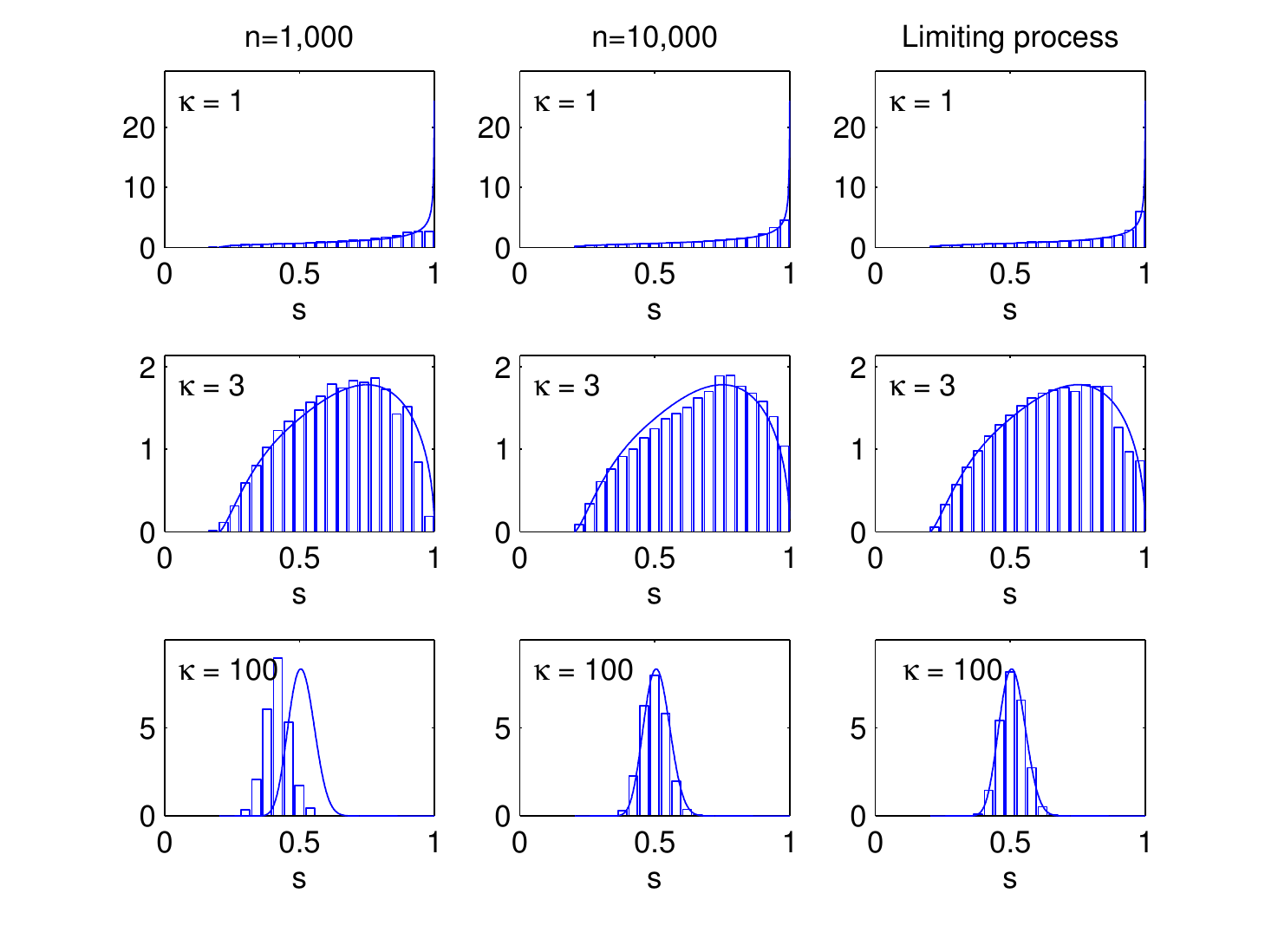}}
\end{center}
\caption{Bar charts of the relative time spent with fraction $s$ susceptible for the epidemic (with $n=1,000$ and $n=10,000$) as well as the limiting process. Also plotted is the stationary distribution of limiting process $f_{S^*}(s)$. Parameter values are: average life length equals $1/\mu=75$ years, $R_0=2$, mean infectious period $1/\gamma\approx$ 1 week and  $\kappa=1, 3$ and $100$. Bar charts are based on simulation over 10,000 years.}
\label{stationary-simul}
\end{figure}

It is seen that the bar charts from the epidemics resemble the limiting stationary distribution $f_{S^*}(s)$, except when $n=1,000$ and $\kappa=100$. When $\kappa$ is small, few outbreaks take place, so even if the outbreaks are large, the population fraction of susceptibles is close to 1 most of the time, which explains why the stationary distribution $S^*$ is concentrated at values close to 1.
For moderate values of $\kappa$, the stationary distribution has positive mass for nearly all $s$ values between $1-\tau(1)=0.2032$ (the fraction susceptible after a major outbreak starting with the entire population being susceptible) and 1. The stationary distribution is seen to be concentrated around $1/R_0$ when $\kappa$ is large, owing to the fact that a new major outbreak occurs quite soon after the population fraction of susceptibles exceeds $1/R_0$, with the effect that the size of major outbreaks is generally small. These observations imply that the stationary distribution is \emph{not} stochastically decreasing (nor increasing) in $\kappa$.

\section{Proofs}\label{proofs}
\subsection{Proof of Theorem~\ref{wconv}}\label{proof}

Let $(\Omega, \mathcal{F}, P)$ be a probability space on which is defined a homogeneous Poisson process $\eta$ on $(0,\infty)$ having rate $\mu \kappa$ and let $0<r_1<r_2<\cdots$ denote the times of the points in $\eta$.  For $n=1,2,\cdots$, let $\eta^{(n)}$ denote the point process with points at $0<r_1^{(n)}<r_2^{(n)}<\cdots$, where $r_k^{(n)}=\frac{\kappa}{n\kappa_n}r_k$ $(k=1,2,\cdots)$.  Let $E^{(n)}$ denote the epidemic process indexed by $n$.  Then $\eta^{(n)}$ gives the points in time when infectives immigrate into the population in $E^{(n)}$.  We construct $E^{(n)}$ $(n=1,2,\cdots)$ and $S$ by first conditioning on $\eta$.

The process $S$ is constructed as follows.  Recall the definition of $\tau(s)$ at~\ref{tau}.  Between the points of $\eta$, $S(t)$ increases deterministically according to the differential equation~\eqref{Sdet}.  For $k=1,2,\cdots$, $S$ has a down jump to $S(r_k-)[1-\tau(S(r_k-))]$ at time $r_k$ with probability $\max(1-(R_0 S(r_k-))^{-1},0)$ (independently for successive $k$), otherwise $S$ continues to grow according to~\eqref{Sdet}.  Thus, $S$ can be described as follows.  Let $t_1<t_2<\cdots$ be the times of the down jumps of $S$, so these form a subset of the points of $\eta$.  Let
\[
f(x,t)=1-(1-x){\rm e}^{-\mu t} \quad (0<x<1, t>0),
\]
so, for fixed $x$, the solution of~\eqref{Sdet} with $S(0)=x$ is $f(x,t)$.  Let $t_0=0$ and suppose that $s_0=S(0)$ is given.  Then, for $k=0,1,\cdots$,
\begin{equation}
\label{Sfromf}
S(t)=f(s_k,t-t_k) \quad (t_k \le t < t_{k+1}),
\end{equation}
where, for $k=1,2,\cdots$, the initial value $s_k=\tilde{s}_k(1-\tau(\tilde{s}_k))$, with
$\tilde{s}_k=S(t_k-)=f(s_{k-1},t_k-t_{k-1})$.
The precise definition of the construction of $E^{(n)}$ $(n=1,2,\cdots)$ is not relevant at this stage.

We prove Theorem~~\ref{wconv} by first proving the corresponding result for processes conditioned on $\eta$.

\begin{lem}
\label{condwconv}
Suppose that $\lim_{n \to \infty} \bar S^{(n)}(0)=s_0$.  Then, for ${\rm P}$-almost all $\eta$,
\begin{equation}
\label{weakS+S-}
\bar S^{(n)}_-|\eta \Rightarrow  S \quad \mbox{and} \quad  \bar S^{(n)}_+|\eta \Rightarrow  S \quad\mbox{as } n \to \infty.
\end{equation}
\end{lem}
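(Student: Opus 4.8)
The plan is to work conditionally on $\eta$ throughout, so that the importation times $r_k$ — and hence $r_k^{(n)}=\tfrac{\kappa}{n\kappa_n}r_k$, with $r_k^{(n)}\to r_k$ — are deterministic, and to realise $E^{(n)}$ and $S$ on a single probability space via a coupling which, with conditional probability tending to $1$, makes their macroscopic behaviour agree on any fixed horizon $[0,t^*]$; weak convergence of the conditional laws then follows from convergence of the conditional Skorokhod distance to $0$ in probability. Fix $t^*>0$ not equal to any $r_k$. For $\mathrm P$-almost every $\eta$ there are only finitely many importations $r_1<\dots<r_m\le t^*$, no two equal, and — after discarding a further null set of the jump randomness — $S(r_k-)\ne 1/R_0$ for every $k$. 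I would run a finite induction on $k=0,1,\dots,m$, carrying the hypothesis that, with conditional probability $\to 1$, the coupled pair $(\bar S^{(n)}_\pm, S)$ restricted to $[0,r_k^{(n)}]$ is within $\varepsilon_n\to 0$ in the Skorokhod metric, and in particular $\bar S^{(n)}(r_k^{(n)}-)\to S(r_k-)$ in probability; the base case $k=0$ is the hypothesis $\bar S^{(n)}(0)\to s_0=S(0)$.

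On $[r_{k-1}^{(n)},r_k^{(n)})$ no infective immigrates. Whatever outbreak was triggered at $r_{k-1}^{(n)}$, minor or major, it is resolved after a time $o_p(1)$ (see below), so for all large $n$ the bulk of the interval is infective-free; there, susceptibles are born at rate $(1-\kappa_n)n\mu\sim n\mu$ and die at rate $\mu S^{(n)}$, making $\bar S^{(n)}$ a density-dependent Markov chain with drift $\mu(1-\kappa_n)-\mu\bar S^{(n)}\to\mu(1-\bar S)$, whence Kurtz's fluid-limit theorem gives uniform (in probability) convergence of $\bar S^{(n)}$ to the arc $t\mapsto f(v^{(n)}_{k-1},t-r_{k-1}^{(n)})$, where $f(x,t)=1-(1-x)e^{-\mu t}$ solves~\eqref{Sdet} and $v^{(n)}_{k-1}$ is the value of $\bar S^{(n)}$ left at the end of the $(k-1)$th transient. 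Because a minor outbreak infects only $O_p(1)$ susceptibles, $v^{(n)}_{k-1}$ coincides, up to $o_p(1)$, with the value $S$ takes just after $r_{k-1}$; during a major-outbreak interval one argues instead with the sandwiched processes $\bar S^{(n)}_\pm$, which there are monotone and span an interval of length $o_p(1)$. This advances the inductive hypothesis up to just before $r_k^{(n)}$.

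At the importation epoch $r_k^{(n)}$ a single infective enters a population with $\bar S^{(n)}(r_k^{(n)}-)\to\tilde s_k:=S(r_k-)$. As long as the number of infectives and the cumulative number infected are $o(n)$ — in particular while $I^{(n)}<\log n$ — the infective process is sandwiched between linear birth–death processes in which a typical infective gives birth at rate $\approx\lambda_n\tilde s_k$ and is removed (recovery or death) at rate $\gamma_n+\mu$; since $\lambda_n/\gamma_n\to R_0$, $\gamma_n\to\infty$ and $\mu$ is fixed, the offspring mean converges to $R_0\tilde s_k$. Standard birth–death extinction asymptotics then give that the probability the outbreak ever reaches $\log n$ infectives converges to $\max(1-(R_0\tilde s_k)^{-1},0)$, and the event that the outbreak at $r_k^{(n)}$ reaches $\log n$ can be coupled with the jump event $\{S\text{ jumps at }r_k\}$ so that they disagree with probability $\to 0$. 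If the outbreak fails to reach $\log n$, it is a conditioned subcritical birth–death process, so it dies out within $O_p(\gamma_n^{-1})=o_p(1)$ after infecting $O_p(1)=o_p(n)$ susceptibles, and $\bar S^{(n)}$ is unaffected in the limit, matching the non-jump of $S$. If it does reach $\log n$, then with probability $\to 1$ it becomes a major outbreak, and I invoke the two classical SIR facts — final size among the initially susceptible equal to $\tau(\tilde s_k)\tilde s_k n(1+o_p(1))$ and duration $O_p(\lambda_n^{-1}\log n)=o_p(1)$, the latter using $\lambda_n/\log n\to\infty$ — transferred from the static to the dynamic population by sandwiching the outbreak between static SIR epidemics whose susceptible pools and contact/removal rates differ from the true ones only by demographically generated perturbations that vanish because the outbreak lasts only $o_p(1)$. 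Hence $\bar S^{(n)}(u_k^{(n)})\to\tilde s_k(1-\tau(\tilde s_k))$, matching the post-jump value of $S$, while $t_k^{(n)}\to r_k$ and $u_k^{(n)}-t_k^{(n)}\to 0$ in probability. This closes the induction.

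Finally I would translate this into Skorokhod convergence on $D[0,t^*]$ (and hence, $t^*$ being arbitrary with $S$ a.s.\ continuous there, on $D[0,\infty)$). The only discontinuities of $S$ on $[0,t^*]$ are its down jumps at those $r_k$ where a major outbreak occurs; the definitions of $\bar S^{(n)}_-$ and $\bar S^{(n)}_+$ replace $\bar S^{(n)}$ on each major-outbreak interval $[t_k^{(n)},u_k^{(n)})$ by its running minimum, resp.\ maximum, so there $\bar S^{(n)}_\pm$ is monotone and descends from $\bar S^{(n)}(t_k^{(n)})\to\tilde s_k$ to $\bar S^{(n)}(u_k^{(n)})\to\tilde s_k(1-\tau(\tilde s_k))$ over a time interval of length $o_p(1)$. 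A time change $\sigma_n:[0,t^*]\to[0,t^*]$ that contracts each $[t_k^{(n)},u_k^{(n)})$ onto the single point $r_k$ and is otherwise uniformly close to the identity then yields $\sup_t|\sigma_n(t)-t|\to 0$ and $\sup_t|\bar S^{(n)}_\pm(\sigma_n^{-1}(t))-S(t)|\to 0$ in probability, i.e.\ $d_{\mathrm{Sk}}(\bar S^{(n)}_\pm,S)\to 0$ in probability, which conditionally on $\eta$ is~\eqref{weakS+S-}. The step I expect to be the main obstacle is establishing, \emph{in the dynamic population}, the two outbreak estimates invoked above — concentration of the major-outbreak final size around $\tau(\tilde s)\tilde s n$ and the $o_p(1)$ bound on the outbreak duration — together with the \emph{uniform} validity of the birth–death approximation over the range through which $\bar S^{(n)}$ may fluctuate in the early phase; making rigorous the separation of the epidemic and demographic time scales and splicing it onto the classical static-population final-size and outbreak-duration asymptotics is where the bulk of the work lies.
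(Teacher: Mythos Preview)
Your proposal is correct and takes essentially the same route as the paper: Kurtz's fluid limit between outbreaks, birth--death sandwiching plus final-size and duration estimates during outbreaks (the paper's Lemmas~\ref{minoreps}--\ref{majoreps}), then a Skorohod time change. The paper's only substantive deviation from your outline is that, instead of directly coupling the major-outbreak indicator at $r_k^{(n)}$ with the jump indicator of $S$ at $r_k$, it first proves convergence in distribution of the key random quantities $(t_k^{(n)},u_k^{(n)},s_k^{(n)},c_k^{(n)},\tilde c_k^{(n)})$ (Lemma~\ref{convd}) and then invokes the Skorokhod representation theorem to upgrade to almost-sure convergence, after which it builds explicit \emph{piecewise-linear} time changes $\lambda_n^\pm$; note that your $\sigma_n$ as described cannot literally collapse $[t_k^{(n)},u_k^{(n)})$ to a single point and remain a strictly increasing bijection, which the $J_1$ metric requires.
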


In order to prove Lemma~\ref{condwconv}, we need some more notation and an extra lemma (Lemma~\ref{convd}, below).
Recall that, for $n=1,2,\cdots$, we assume $I^{(n)}(0)=0$, that $t^{(n)}_0=u^{(n)}_0=0$ and that, for $k=1,2,\cdots$, $t^{(n)}_k=\inf\left\{t\ge u^{(n)}_{k-1}:I^{(n)}(t) \ge \log n\right\}$ and $u^{(n)}_k=\inf\left\{t\ge t^{(n)}_{k}:I^{(n)}(t)=0\right\}$.  For $n=1,2,\cdots$, let $s^{(n)}_0=\bar S^{(n)}\left(u^{(n)}_0\right)$ and, for $k=1,2,\cdots$, let $s^{(n)}_k=\bar S^{(n)}\left(u^{(n)}_k\right)$.
\[
c^{(n)}_k=\min_{t^{(n)}_k \le t \le u^{(n)}_k} \bar S^{(n)}(t)\qquad \mbox{and} \qquad
\tilde{c}^{(n)}_k=\max_{t^{(n)}_k \le t \le u^{(n)}_k} \bar S^{(n)}(t)
\]

\begin{lem}
\label{convd}
Suppose that $\lim_{n \to \infty} \bar S^{(n)}(0)=s_0$.  Then the following hold for ${\rm P}$-almost all $\eta$.
\begin{enumerate}
\item[(i)]
For $k=1,2, \cdots$,\quad $u^{(n)}_k|\eta \convD t_k,\quad t^{(n)}_k|\eta \convD t_k$,\quad
$s^{(n)}_k |\eta \convD s_k$,\newline
$c^{(n)}_k |\eta \convD s_k$ \quad and \quad $\tilde{c}^{(n)}_k |\eta \convD \tilde{s}_k$ as $n \to \infty$.
\newline
\item[(ii)] For $k=0,1, \cdots$,
\begin{equation}
\label{susboundcd}
\sup_{u^{(n)}_k \le t < t^{(n)}_{k+1}} \left|\bar S^{(n)}(t)-f\left(c^{(n)}_k,t-u^{(n)}_k\right)\right||\eta \convD 0 \quad \text{as} \quad n \to \infty.
\end{equation}
\item[(iii)] $t_k \to \infty$ as $k \to \infty$.
\end{enumerate}
\end{lem}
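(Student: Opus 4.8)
The plan is to prove the three parts essentially in the order stated, treating a single regenerative cycle at a time and using induction on $k$, with the key technical input being a comparison between the dynamic epidemic restricted to a major outbreak and a near-critical/supercritical branching process whose time scale is $\Theta(\log n/\lambda_n)\to 0$.

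First I would set up the induction. Fix a realisation of $\eta$ outside a $P$-null set; the points $r_1<r_2<\cdots$ are then fixed, and recall $r_k^{(n)}=(\kappa/(n\kappa_n))r_k\to r_k$. Between $u^{(n)}_{k-1}$ and the next importation the infective population is $0$, so $\bar S^{(n)}$ evolves as a pure immigration--death process on the susceptibles (replacement of dying recovereds/susceptibles): a standard functional law of large numbers gives that, conditionally on $\eta$, $\bar S^{(n)}$ on this interval converges uniformly in probability to the deterministic solution $f(\cdot,\cdot)$ of~\eqref{Sdet}. This is exactly the content needed for part~(ii) on the interval $[u^{(n)}_{k-1},t^{(n)}_k)$ once we know $c^{(n)}_{k-1}|\eta\convD s_{k-1}$. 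At an importation point $r_j^{(n)}$ with current susceptible fraction $\approx f(s_{k-1},r_j^{(n)}-u^{(n)}_{k-1})$, the single imported infective initiates a process which, on the fast time scale, is well approximated by a linear birth--death process with birth rate $\lambda_n\bar S^{(n)}$ and death rate $\gamma_n+\mu\approx\gamma_n$; the probability it reaches level $\log n$ (i.e.\ becomes a major outbreak) tends to $\max(1-(R_0 f(s_{k-1},r_j^{(n)}-u^{(n)}_{k-1}))^{-1},0)$, independently over $j$. Summing over the importation points between renewals and using the construction of $S$, the first importation point that triggers a major outbreak converges in distribution (conditionally on $\eta$) to the down-jump time $t_k$, which establishes $t^{(n)}_k|\eta\convD t_k$; minor outbreaks in between change $\bar S^{(n)}$ by only $o_p(1)$ and do not disturb the deterministic approximation.

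Next I would analyse the major outbreak itself, i.e.\ the interval $[t^{(n)}_k,u^{(n)}_k)$. Starting from $\log n$ infectives and susceptible fraction converging to $\tilde s_k=f(s_{k-1},t_k-t_{k-1})$, one shows: (a) the duration $u^{(n)}_k-t^{(n)}_k$ is $O_p(\log n/\lambda_n)\to 0$, so $u^{(n)}_k-t^{(n)}_k|\eta\convD 0$ and hence $u^{(n)}_k|\eta\convD t_k$; (b) over this vanishing time interval the demographic terms ($\mu n$ births, $\mu(\cdot)$ deaths) contribute $o_p(n)$ to the change in $S^{(n)}$, so the epidemic behaves like the static Markovian SIR epidemic with initial susceptible fraction $\tilde s_k$; (c) by the classical final-size result for such epidemics (the deterministic limit via~\eqref{tau}), the fraction of those susceptibles ultimately infected converges in probability to $\tau(\tilde s_k)$, giving $s^{(n)}_k|\eta\convD \tilde s_k(1-\tau(\tilde s_k))=s_k$; and (d) since during a major outbreak $S^{(n)}$ is non-increasing (the only susceptible-decreasing events dominate the birth-of-susceptible events on this time scale — more carefully, the net change is monotone up to $o_p(n)$), the running minimum $c^{(n)}_k$ has the same limit $s_k$ as the endpoint, while the running maximum $\tilde c^{(n)}_k$ converges to the starting value $\tilde s_k$. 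Combining with the deterministic approximation on $[u^{(n)}_k,t^{(n)}_{k+1})$ closes the induction step for parts~(i) and~(ii). Part~(iii), $t_k\to\infty$, is then immediate: between consecutive renewals $S$ must climb deterministically from $1-\tau(1)$-ish back up to $1/R_0$ and then wait an a.s.\ positive time for the triggering importation, so inter-renewal times are a.s.\ bounded below by a positive constant (indeed by $\mu^{-1}\log[(1-(1-\tau(1)))/(1-1/R_0)]>0$ plus a positive residual), hence $t_k\ge$ (const)$\,k\to\infty$.

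The main obstacle is step~(b)--(c): rigorously transferring the final-size and duration results from the static SIR epidemic to the dynamic one. One must control the cumulative effect of births and disease-unrelated deaths over the outbreak, show they are $o_p(n)$ uniformly, and handle the subtlety that the outbreak could in principle be ``long'' on an event of small probability; the assumption $\lambda_n/\log n\to\infty$ is what forces the duration $\Theta(\log n/\lambda_n)\to 0$ and kills these correction terms. A secondary but genuine difficulty is that the convergences are only in distribution conditionally on $\eta$, so one must be careful that the coupling/Skorohod-representation arguments used to compose the cycle-by-cycle limits respect the conditioning; this is handled by noting that, for $P$-a.e.\ $\eta$, the relevant random variables depend on disjoint ``pieces'' of the driving randomness and applying the induction together with the continuous mapping theorem to the (deterministic, given $\eta$) maps $f(\cdot,r_j^{(n)}-\cdot)$ and $\tilde s\mapsto\tilde s(1-\tau(\tilde s))$.
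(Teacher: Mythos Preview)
Your overall strategy---induction on cycles, branching-process sandwiching for the triggering probability at each importation, functional LLN for $\bar S^{(n)}$ between outbreaks, and a static-SIR comparison for the major outbreak---is exactly the paper's approach; the paper packages these as separate lemmas (one for the inter-outbreak phase and two for the major outbreak, the latter first letting the infectives grow from $\log n$ to $\Theta(n)$ via birth--death bounds and then invoking a Kurtz/Pollett density-dependent LLN on the fast time scale), but the ideas coincide.

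One concrete error: your argument for part~(iii) is not valid as stated. Inter-jump (or inter-renewal) times are \emph{not} almost surely bounded below by a positive constant. When $T$ is small, $S(T-)$ is close to $1/R_0$, the jump $S(T-)\tau(S(T-))$ is small, $S(T)$ is close to $1/R_0$, and the climb-back time is small too; both contributions to $T^*$ can be made arbitrarily small with positive probability. The quantity $\mu^{-1}\log[\tau(1)/(1-1/R_0)]$ you wrote is the \emph{maximum} climb-back time, not a lower bound. The paper's proof of (iii) is a single line and avoids this entirely: by construction $\{t_1,t_2,\dots\}\subseteq\{r_1,r_2,\dots\}$, the points of the rate-$\mu\kappa$ Poisson process $\eta$, and $r_k\to\infty$ almost surely. (Alternatively, the inter-jump times are i.i.d.\ with strictly positive mean, so the SLLN gives $t_k\to\infty$; but you do not need, and do not have, a uniform positive lower bound.)
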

\begin{proof} See Section~\ref{proofconvd}.
\end{proof}

\begin{proof}[Proof of Lemma~\ref{condwconv}] First note that since results concerning convergence in distribution in the Euclidean space $\mathbb{R}^k$ carry over in all essential respects to convergence in distribution in $\mathbb{R}^\infty$ (see~\cite{Bill}, page 19), the Skorohod representation theorem implies that we may assume that the convergence in Lemma~\ref{convd} holds almost surely.  Let $A \in \mathcal{F}$ be the set $\omega \in \Omega$ such that (i) for $k=1,2 \cdots,$
\begin{align*}
\lim_{n \to \infty}  u^{(n)}_k&(\omega)=t_k(\omega),\quad \lim_{n \to \infty} t^{(n)}_k(\omega)=t_k(\omega), \quad \lim_{n \to \infty} s^{(n)}_k(\omega)=s_k(\omega),\\
&\lim_{n \to \infty} c^{(n)}_k(\omega)=s_k(\omega)
\quad\mbox{and} \quad
\lim_{n \to \infty} \tilde{c}^{(n)}_k(\omega)=\tilde{s}_k(\omega);
\end{align*}
(ii) for $k=0,1,\cdots$,
\begin{equation}
\label{boundsf1}
\lim_{n\to \infty} \sup_{u^{(n)}_k (\omega) \le t < t^{(n)}_{k+1}(\omega)} \left|\bar S^{(n)}(t,\omega)-f\left(s^{(n)}_k(\omega),t-u^{(n)}_k\right)(\omega)\right|=0;
\end{equation}
and (iii) $t_k(\omega) \to \infty$ as $k \to \infty$.
Then ${\rm P}(A|\eta)=1$ for ${\rm P}$-almost all $\eta$.

For $g,h \in D[0,\infty)$, $d(g,h)$ denotes the distance between $g$ and $h$ in the Skorohod metric (see~\cite{EK86}, Chapter 3.5).  Let $\eta$ satisfy ${\rm P}(A|\eta)=1$.  We show that for all $\omega \in A$,
\[
\lim_{n \to \infty} d(\bar S^{(n)}_-(\omega), S(\omega))=0 \quad \mbox{and} \quad \lim_{n \to \infty} d(\bar S^{(n)}_+(\omega), S(\omega))=0.
\]
It then follows that, under the Skorohod metric, both $\bar S^{(n)}_-|\eta$ and $\bar S^{(n)}_+|\eta$ converge almost surely to $S$, which implies~\eqref{weakS+S-}.

By Proposition 5.3 on page 119 of~\cite{EK86}, to show that $d(g_n,g) \to 0$ as $n \to \infty$ it is sufficient to show that for each $T>0$, there exists a sequence $(\lambda_n)$  of strictly increasing functions mapping $[0,\infty)$ onto $[0,\infty)$ so that
\begin{equation}
\label{skconvcond1}
\lim_{n \to \infty} \sup_{0\le t \le T}|\lambda_n(t)-t|=0
\end{equation}
and
\begin{equation}
\label{skconvcond2}
\lim_{n \to \infty} \sup_{0\le t \le T}|g(\lambda_n(t))-g_n(t)|=0.
\end{equation}

For ease of exposition we now suppress dependence on $\omega$.
Fix $T\ge t_1$ and let $m=\max\{k:t_k \le T\}$, so $1 \le m < \infty$..  Then, there exists $\delta>0$ such that $u^{(n)}_m<T+\delta$ for all sufficiently large $n$.  For such $n$, let $\lambda^{(n)}_-$ be the piecewise-linear function joining the points $(0,0), (t^{(n)}_1,t_1),\cdots,(t^{(n)}_m,t_m),(T+\delta,T+\delta)$, with  $\lambda^{(n)}_-(t)=t$ for $t>T+\delta$.  Similarly,  let $\lambda^{(n)}_+$ be the piecewise-linear function joining the points $(0,0), (u^{(n)}_1,t_1),\cdots,(u^{(n)}_m,t_m),(T+\delta,T+\delta)$, with  $\lambda^{(n)}_+(t)=t$ for $t>T+\delta$.  (Note that $\lambda^{(n)}_+(t)\le\lambda^{(n)}_-(t)$ with strict inequality for $t \in (0,T+\delta)$.)
The functions $\lambda^{(n)}_-$ and $\lambda^{(n)}_+$ are strictly increasing and satisfy~\eqref{skconvcond1}, since $t^{(n)}_k \to t_k$ and $u^{(n)}_k \to t_k$ as $n \to \infty$ ($k=1,2,\cdots,m)$.  Thus, to complete the proof we show that
\begin{equation}
\label{skcond2-}
\lim_{n \to \infty} \sup_{0\le t \le T}\left|S\left(\lambda^{(n)}_-(t)\right)-\bar S^{(n)}_-(t)\right|=0
\end{equation}
and
\begin{equation}
\label{skcond2+}
\lim_{n \to \infty} \sup_{0\le t \le T}\left|S\left(\lambda^{(n)}_+(t)\right)-\bar S^{(n)}_+(t)\right|=0.
\end{equation}

Considering~\eqref{skcond2-} first, note that for $k=1,2,\cdots,m$, since $S\left(\lambda^{(n)}_-(t)\right)$ is increasing on $[t^{(n)}_k , u^{(n)}_k]$ and $\bar S^{(n)}_-(t)=c^{(n)}_k$ for all $t \in [t^{(n)}_k , u^{(n)}_k)$,

\begin{align}
\sup_{t^{(n)}_k\le t < u^{(n)}_k}&
\left|S\left(\lambda^{(n)}_-(t)\right)-\bar S^{(n)}_-(t)\right|\nonumber\\&\le
\max\left\{\left|S\left(\lambda^{(n)}_-(t^{(n)}_k)\right)-c^{(n)}_k\right|,\left|S\left(\lambda^{(n)}_-(u^{(n)}_k)\right)-c^{(n)}_k\right|\right\}\nonumber\\
&\to 0 \quad \mbox{as } n \to \infty, \label{skcond2-a}
\end{align}
since $\lambda^{(n)}_-(t^{(n)}_k)=t_k$, $c^{(n)}_k \to s_k=S(t_k)$ and
$\lambda^{(n)}_-(u^{(n)}_k) \to t_k$ (as $u^{(n)}_k \to t_k$ and $\lambda^{(n)}_-$ is continuous), so
$S\left(\lambda^{(n)}_-(u^{(n)}_k)\right) \to s_k$ as $S$ is right-continuous.

Also, for $k=1,2,\cdots,m-1$,
\[
\sup_{u^{(n)}_k \le t < t^{(n)}_{k+1}}\left|S\left(\lambda^{(n)}_-(t)\right)-\bar S^{(n)}_-(t)\right|\le A(n,k)+B(n,k),
\]
where
\[
A(n,k)=\sup_{u^{(n)}_k \le t < t^{(n)}_{k+1}}\left|S\left(\lambda^{(n)}_-(t)\right)-f\left(s^{(n)}_k,t-u^{(n)}_k\right)\right|
\]
and
\[
B(n,k)=\sup_{u^{(n)}_k \le t < t^{(n)}_{k+1}}\left|f\left(s^{(n)}_k,t-u^{(n)}_k\right)-\bar S^{(n)}_-(t)\right|.
\]

Now $\lambda^{(n)}_-(t) \in [t_k, t_{k+1})$ for $t \in [u^{(n)}_k ,t^{(n)}_{k+1})$, so using~\eqref{Sfromf},
\begin{align}
A(n,k)&=\sup_{u^{(n)}_k \le t < t^{(n)}_{k+1}}\left|f\left(s_k,\lambda^{(n)}_-(t)-t_k\right)-f\left(s^{(n)}_k,t-u^{(n)}_k\right)\right|\nonumber\\
&\le\sup_{u^{(n)}_k \le t < t^{(n)}_{k+1}}\left|f\left(s_k,\lambda^{(n)}_-(t)-t_k\right)-f\left(s_k,t-u^{(n)}_k\right)\right|\nonumber\\
&\qquad\qquad+\sup_{u^{(n)}_k \le t < t^{(n)}_{k+1}}\left|f\left(s_k,t-u^{(n)}_k\right)-f\left(s^{(n)}_k,t-u^{(n)}_k\right)\right|.\label{Ankbound}
\end{align}
A simple argument using the mean value theorem shows that, for $x\in [0,1]$ and $t,t'\ge0$,
\begin{equation}
\label{fbound1}
|f(x,t)-f(x,t')|\le(1-x)\mu|t-t'|.
\end{equation}
Now
\begin{eqnarray*}
\sup_{u^{(n)}_k \le t < t^{(n)}_{k+1}}\left|\lambda^{(n)}_-(t)-t_k-(t-u^{(n)}_k)\right| &\le& \sup_{u^{(n)}_k \le t < t^{(n)}_{k+1}}\left| \lambda^{(n)}_-(t)-t\right|+\left|t_k-u^{(n)}_k\right|\\
&\to&0\quad \mbox{as } n \to \infty,
\end{eqnarray*}
as $\lambda^{(n)}_-$ satisfies~\eqref{skconvcond1} and  $u^{(n)}_k \to t_k$ as $n \to \infty$. It then follows using~\eqref{fbound1} that the first term on the right hand side of~\eqref{Ankbound} tends to $0$ as  $n \to \infty$.
Also, for $x,y \in [0,1]$ and $t \ge 0$,
\[
f(x,t)-f(y,t)=(y-x){\rm e}^{-\mu t},
\]
so the second term on the right hand side of~\eqref{Ankbound} tends to $0$ as $n \to \infty$, since $s^{(n)}_k \to s_k$ as $n \to \infty$.  Thus, $A(n,k) \to 0$ as $n \to \infty$.

Note that $\bar S^{(n)}_-(t)-=\bar S^{(n)}(t)$ for $t \in [u^{(n)}_k,t^{(n)}_{k+1})$, so~\eqref{boundsf1} implies that $B(n,k)$ also converges to $0$ as $n \to \infty$, whence
\begin{equation}
\label{skcond2-b}
\sup_{u^{(n)}_k \le t < t^{(n)}_{k+1}}\left|S\left(\lambda^{(n)}_-(t)\right)-\bar S^{(n)}_-(t)\right| \to 0 \quad \mbox{as } n\to \infty.
\end{equation}
Combining~\eqref{skcond2-a} and \eqref{skcond2-b} yields that,
\begin{equation}
\label{skcond2-c}
\lim_{n \to \infty} \sup_{u_1^{(n)} \le t < t^{(n)}_m} \left|S\left(\lambda^{(n)}_-(t)\right)-\bar S^{(n)}_-(t)\right|=0.
\end{equation}

A similar argument to the derivation of~\eqref{skcond2-b} yields
\[
\lim_{n \to \infty} \sup_{0 \le t < u^{(n)}_1} \left|S\left(\lambda^{(n)}_-(t)\right)-\bar S^{(n)}_-(t)\right|=
\lim_{n \to \infty} \sup_{t^{(n)}_m \le t \le T} \left|S\left(\lambda^{(n)}_-(t)\right)-\bar S^{(n)}_-(t)\right|=
0,
\]
which together with~\eqref{skcond2-c} yields~\eqref{skcond2-}, as required.

The proof of~\eqref{skcond2+} is similar to that of~\eqref{skcond2-} and hence omitted.
\end{proof}

\begin{proof}[Proof of Theorem~\ref{wconv}]
We prove the result for $\bar S^{(n)}_-$.  The proof for $\bar S^{(n)}_+$ is identical.
Recall that if $X_n$ $(n=1,2,\cdots)$ and $X$ are random elements of $D[0,\infty)$ then $X_n \Rightarrow X$ as $n \to \infty$ if and only if ${\rm E}\left[f(X_n)\right] \to \left[f(X)\right]$ as $n \to \infty$ for all bounded, uniformly continuous functions $f:D[0,\infty) \to \mathbb{R}$ (see, for example,~\cite{EK86}, Chapter 3, Theorem 3.1).
Let $f:D[0,\infty) \to \mathbb{R}$ be any such function.  Then Lemma~\ref{condwconv} implies that, for
${\rm P}$-almost all $\eta$,
\[
\lim_{n \to \infty}{\rm E}\left[f(\bar S^{(n)}_-)|\eta\right] = {\rm E}\left[f(S)|\eta\right].
\]
Hence, by the dominated convergence theorem,
\begin{eqnarray*}
\lim_{n \to \infty}{\rm E}\left[f(\bar S^{(n)}_-)\right] &=&
\lim_{n \to \infty}{\rm E}_\eta\left[{\rm E}\left[f(\bar S^{(n)}_-)|\eta\right]\right]\\
&=&{\rm E}_\eta\left[\lim_{n \to \infty}{\rm E}\left[f(\bar S^{(n)}_-)|\eta\right]\right]\\
&=&{\rm E}_\eta\left[{\rm E}\left[f(S)|\eta\right]\right]\\
&=&{\rm E}\left[f(S)\right].
\end{eqnarray*}
This holds for all bounded, uniformly continuous $f:D[0,\infty) \to \mathbb{R}$, so $\bar S^{(n)}_- \Rightarrow  S$ as $n \to \infty$, as required.
\end{proof}

We end this subsection by showing that the occupancy time functional $H_{t^*}^a$, defined at~\eqref{occfunct}, satisfies
${\rm P}\left(S \in C_{H_{t^*}^a}\right)=1$.  Recall that $t_1<t_2<\cdots$ denote the jump times of $S$.  Let $v_1=\inf\{t \ge 0: S(t)=a\}$ and, for $k=2,3,\cdots$, let $v_k=\inf\{t > v_{k-1}: S(t)=a\}$.  Let $C \in \mathcal{F}$ be the set of $\omega \in \Omega$ such that $t_k(\omega)$ (and hence also $v_k(\omega)$) tends to $\infty$ as $k \to \infty$.  Then, by Lemma~\ref{convd} (iii), ${\rm P}(C)=1$.  We show that if $g_n \in D[0,\infty)$ $(n=1,2,\cdots)$ and $\lim_{n \to \infty} d(g_n,S(\omega))=0$, then
$\lim_{n \to \infty} H_{t^*}^ag_n=H_{t^*}^aS(\omega)$, for $\omega \in C$, whence ${\rm P}\left(S \in C_{H_{t^*}^a}\right)=1$.

Suppose that $\omega \in C$.  Dropping the explict dependence of $S$ on $\omega$, since $\lim_{n \to \infty} d(g_n,S)=0$, by Proposition 5.3 on page 119 
of~\cite{EK86}, there exists a sequence $(\lambda_n)$ of strictly increasing functions mapping $[0, \infty)$ onto $[0, \infty)$ such that 
\begin{equation}
\label{skorconv}
\lim_{n \to \infty} \sup_{0\le t \le t^*}|\lambda_n(t)-t|=0
\quad \mbox{and} \quad
\lim_{n \to \infty} \sup_{0\le t \le t^*}|S(\lambda_n(t))-g_n(t)|=0.
\end{equation}
Now
\begin{equation*}
\left|H_{t^*}^a g_n - H_{t^*}^a S\right|=\left| \int_0^{t^*} 1_{\{g_n(t)\le a\}}-1_{\{S(t)\le a\}}\,{\rm d}t\right|\\
\le A_n +B_n,
\end{equation*}
where
\[
A_n=\int_0^{t^*}\left|1_{\{g_n(t)\le a\}}-1_{\{S\left(\lambda_n(t)\right)\le a\}}\right|\,{\rm d}t
\]
and
\[
B_n=\int_0^{t^*}\left|1_{\{S\left(\lambda_n(t)\right)\le a\}}-1_{\{S(t)\le a\}}\right|\,{\rm d}t.
\]
Let $D=[0,t^*]\cap\left(\left\{t_1,t_2,\cdots\right\}\cup\left\{v_1,v_2,\cdots\right\}\right)$. Then
$D$ has Lebesgue measure zero and $1_{\{S\left(\lambda_n(t)\right)\le a\}}-1_{\{S(t)\le a\}}
\to 0$ as $n \to \infty$, for $t \in [0,t^*] \setminus D$, since $\lim_{n \to \infty}\lambda_n(t)=t$, by the first equation in~\ref{skorconv}, and $S$ is continuous at such $t$.  Thus $\lim_{n \to \infty} B_n =0$ by the dominated convergence theorem.  A similar argument, using in addition the second equation in~\ref{skorconv}, shows that $\lim_{n \to \infty} A_n =0$.  Thus, $\lim_{n \to \infty}H_{t^*}^a g_n = H_{t^*}^a S$, as required.
\subsection{Proof of Lemma~\ref{convd}}
\label{proofconvd}

We prove Lemma~\ref{convd} by splitting the SIR-D-I epidemic process $E^{(n)}$ into cycles, where now
a cycle begins at the end of a major outbreak and finishes at the end of the following major outbreak.  Thus a cycle consists of two stages: stage 1, during which the susceptible population grows approximately deterministically until there are at least $\log n$ infectives present; and stage 2, comprising the major outbreak caused by these $\log n$ infectives, during which the susceptible population crashes.

Recall that, as $n \to \infty$, the point process $\eta^{(n)}$, describing immigration times of infectives in $E^{(n)}$ converges almost surely to the point process $\eta$ governing times when down jumps \emph{may} occur in the limiting process $S$. Lemma~\ref{minoreps} considers the initial stage 1 and shows, using birth-and-death processes that sandwich the process of infectives, that for ${\rm P}$-almost all $\eta$,
as $ n \to \infty$, for successive importations of infectives until a major outbreak occurs, the probability a given importation triggers a major outbreak converges to the probability that the corresponding importation results in a down jump in the limiting process $S$.  Consequently, the time until
there are at least $\log n$ infectives in $E^{(n)}$ converges weakly to the time of the first down jump in $S$, since $\eta^{(n)}$ converges almost surely to $\eta$.  Further,
application of the law of large numbers for density dependent population processes (~\cite{EK86}, Chapter 11) shows that up until the first down jump of $S$, the scaled process of susceptibles, $\bar S^{(n)}= n^{-1} S$, converges weakly in the uniform metric to $S$, since minor epidemics infect order $o_p(n)$ individuals.

Lemmas~\ref{majoreps0} and~\ref{majoreps} concern the limiting size and duration of a typical major outbreak.  Lemma~\ref{majoreps0} considers outbreaks in which the initial number of infectives is of exact order $n$, for which the above-mentioned law of large numbers is applicable.  This is then used to prove
Lemma~\ref{majoreps}, which considers major outbreaks triggered by $\log n$ infectives.  Finally, Lemma~\ref{convd} follows easily by induction using Lemmas~\ref{minoreps} and~\ref{majoreps}, since
$E^{(n)}$ is Markov.

The proof involves extensive use of birth-and-death processes that bound the process of infectives in
the epidemic model (cf.~\cite{Whittle55}).  We first give some notation concerning birth-and-death processes
and then state a lemma, proved in Appendix~\ref{appendix1}, concerning properties of sequences of such processes.

Let $Z_{\alpha,\beta,k}=\{Z_{\alpha,\beta,k}(t):t\ge 0\}$ denote a linear birth-and-death process, with $Z_{\alpha,\beta,k}(0)=k$, birth rate $\alpha$ and death rate $\beta$.  For $x>k$, let $\tau_{\alpha,\beta,k}(x)=\inf\{t>0:Z_{\alpha,\beta,k}(t)\ge x\}$, where $\tau_{\alpha,\beta,k}(x)=\infty$ if $Z_{\alpha,\beta,k}(t)< x$ for all $t>0$. (Throughout the paper we adopt the convention that the hitting time of an event is infinite if the event never occurs.) Let $\tau_{\alpha,\beta,k}(0)=
\inf\{t>0: Z_{\alpha,\beta,k}(t)=0\}$ denote the duration of $Z_{\alpha,\beta,k}$.  For $t\ge0$, let $B_{\alpha,\beta,k}(t)$ denote the total number of births during $(0,t]$ in $Z_{\alpha,\beta,k}$, and let $B_{\alpha,\beta,k}(\infty)$ denote the total progeny of $Z_{\alpha,\beta,k}$, not including the $k$ ancestors.  Further, for $x>0$, let $\hat{\tau}_{\alpha,\beta,k}(x)=\inf\{t>0:B_{\alpha,\beta,k}(t)\ge x\}$.

\begin{lem}
\label{bdresults}
Suppose that $\alpha_n=a\beta_n$ $(n=1,2,\cdots)$, where $a>0$ is constant and $\log n/\beta_n \to 0$ as $n \to \infty$.
\begin{enumerate}
\item[(a)]  If $a<1$, then
\begin{enumerate}
\item[(i)]  for all $t>0$,
\[
\lim_{n \to \infty}{\rm P}\left(\tau_{\alpha_n,\beta_n,1}(0)>t\right)=0;
\]
\item[(ii)] $\lim_{n\to\infty}{\rm P}\left(\tau_{\alpha_n,\beta_n,1}(\log n)= \infty\right)=1$; and
\item[(iii)] for any $c>0$,
\[
\tau_{\alpha_n,\beta_n,\left \lceil{c n}\right \rceil}(0) \convp 0 \quad\mbox{as }  n \to \infty.
\]
\end{enumerate}
\item[(b)]  If $a>1$, then
\begin{enumerate}
\item[(i)] $\lim_{n\to\infty}{\rm P}\left(\tau_{\alpha_n,\beta_n,1}(\log n)< \tau_{\alpha_n,\beta_n,1}(0)\right)=1-\frac{1}{a},\newline \lim_{n\to\infty}{\rm P}\left(\tau_{\alpha_n,\beta_n,1}(0)< \tau_{\alpha_n,\beta_n,1}(\log n)\right)=\frac{1}{a}$;
\item[(ii)] $\min\left(\tau_{\alpha_n,\beta_n,1}(\log n), \tau_{\alpha_n,\beta_n,1}(0)\right)\convp 0$ as $n \to \infty$;
\item[(iii)] $\lim_{n\to\infty}{\rm P}\left(B_{\alpha_n,\beta_n,1}\left(\min\left\{\tau_{\alpha_n,\beta_n,1}(\log n), \tau_{\alpha_n,\beta_n,1}(0)\right\}\right)<n^{\frac{1}{3}}\right)=1$; and
\item[(iv)] for any $c>0$,
\[
\hat{\tau}_{\alpha_n,\beta_n,\left \lceil{\log n}\right \rceil}(cn) \convp 0 \quad\mbox{as }  n \to \infty.
\]
\end{enumerate}
\end{enumerate}
\end{lem}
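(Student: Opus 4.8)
The plan rests on the classical decoupling, for a linear birth-and-death process, of the embedded jump chain from the holding times. Observed at its jump epochs, $Z_{\alpha_n,\beta_n,k}$ is a nearest-neighbour random walk on $\mathbb{Z}_+$ absorbed at $0$ which, from every state $j\ge1$, steps up with probability $p=\alpha_n/(\alpha_n+\beta_n)=a/(1+a)$ and down with probability $q=\beta_n/(\alpha_n+\beta_n)=1/(1+a)$, independently of $j$ and of $k$; and, given the jump-chain trajectory, the sojourn time in state $j$ is ${\rm Exp}(j(\alpha_n+\beta_n))={\rm Exp}(j(1+a)\beta_n)$, hence stochastically at most ${\rm Exp}((1+a)\beta_n)$. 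Note $\log n/\beta_n\to0$ forces $\beta_n\to\infty$, so holding times are short. The walk is down-biased ($p<q$) when $a<1$ and up-biased ($p>q$) when $a>1$, and the hitting-probability statements follow at once from the gambler's-ruin identity: from state $1$ the walk reaches level $m$ before $0$ with probability $(1-r)/(1-r^m)$, $r:=q/p=1/a$. With $m=\lceil\log n\rceil\to\infty$ this tends to $0$ when $a<1$; since a subcritical process hits $0$ almost surely, this gives (a)(ii). When $a>1$ it tends to $1-1/a$, and since reaching $\log n$ before $0$ is the complement of reaching $0$ before $\log n$, both limits in (b)(i) follow.

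For the time statements I would bound the number of jumps and multiply through by the dominating ${\rm Exp}((1+a)\beta_n)$ holding time, using Wald's identity and the absence of overshoot for a nearest-neighbour walk. For (a)(i): from $1$ the down-biased walk is absorbed at $0$ after $N$ jumps with ${\rm E}[N]=1/(q-p)$, so the extinction time is stochastically at most a sum of $N$ i.i.d.\ ${\rm Exp}((1+a)\beta_n)$ variables, of mean $\le\{(q-p)(1+a)\beta_n\}^{-1}\to0$; Markov's inequality finishes it. For (b)(ii) and (b)(iii): run the up-biased walk from $1$ until it leaves $\{0,\dots,m\}$, $m=\lceil\log n\rceil$; the number of jumps $N$ is at most the hitting time of $m$ for the walk without the $0$-boundary, which has mean $(m-1)/(p-q)=O(\log n)$. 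Hence the elapsed time has mean $O(\log n/\beta_n)\to0$, giving (b)(ii), while the number of births, being at most $N$, is $O_p(\log n)=o_p(n^{1/3})$, giving (b)(iii). For (a)(iii): from $\lceil cn\rceil$ particles the extinction time is the maximum of $\lceil cn\rceil$ i.i.d.\ single-particle extinction times $\tau=\tau_{\alpha_n,\beta_n,1}(0)$, and the explicit formula ${\rm P}(\tau>t)=(1-a)/({\rm e}^{(1-a)\beta_n t}-a)\le{\rm e}^{-(1-a)\beta_n t}$ ($t>0$) gives ${\rm P}(\max>t)\le\lceil cn\rceil\,{\rm e}^{-(1-a)\beta_n t}\to0$, since $\beta_n/\log n\to\infty$.

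The main obstacle is (b)(iv): that $\lceil\log n\rceil$ initial individuals generate $cn$ births in time $o_p(1)$. Since at any time births minus deaths equals the current population minus $\lceil\log n\rceil$, it suffices to show the population reaches level $K_n:=\lceil cn\rceil+\lceil\log n\rceil$ in time $o_p(1)$. Gambler's ruin from $\lceil\log n\rceil$ shows the event $\mathcal G$ that $K_n$ is reached before $0$ has probability $\ge1-(1/a)^{\lceil\log n\rceil}\to1$. On $\mathcal G$ one may replace the process by its Doob $h$-transform, $h(j)=(1-r^j)/(1-r^{K_n})$; a short computation, using that $h$ is harmonic for the generator, shows this transform leaves the total jump rate $j(1+a)\beta_n$ in state $j$ unchanged and only raises the upward step probability. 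Writing $g(j)$ for the expected time the transformed process takes to climb from $j$ to $j+1$, first-step analysis gives $g(j)\le p^{-1}\{j(1+a)\beta_n\}^{-1}+r\,g(j-1)$ with $g(1)=\{(1+a)\beta_n\}^{-1}$ (the transform never visits $0$), whence, interchanging the order of summation, $\sum_{j=1}^{K_n-1}g(j)=O(\beta_n^{-1}\log K_n)=O(\beta_n^{-1}\log n)\to0$; Markov's inequality then yields (b)(iv). The delicate points are the $h$-transform bookkeeping (in particular checking the holding times are not inflated) and tracking the $n$-dependent level $K_n$ throughout; an alternative route isolates a single never-extinct sub-lineage, whose law after rescaling time by $\beta_n$ does not depend on $n$ and which grows like ${\rm e}^{(a-1)\beta_n t}$, hence reaches order $n$ in time $O_p(\beta_n^{-1}\log n)$.
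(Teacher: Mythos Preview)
Your proof is correct, and it takes a genuinely different route from the paper's.

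The paper's argument rests entirely on the time–rescaling identity
\(
\{Z_{\alpha_n,\beta_n,k}(t):t\ge0\}\eqD\{Z_{a,1,k}(\beta_n t):t\ge0\},
\)
which couples all the $n$-indexed processes to a single fixed-parameter process $Z_{a,1,k}$. Parts (a)(i) and (a)(iii) come from the first-moment bound ${\rm P}(Z(t)\ge1)\le{\rm E}[Z(t)]=k\,{\rm e}^{-(1-a)\beta_n t}$ directly; (a)(ii) is immediate from the scaling and subcriticality of $Z_{a,1,1}$. For the supercritical case the paper invokes the classical almost-sure limit ${\rm e}^{-(a-1)t}Z_{a,1,1}(t)\to W$ (with $\{W=0\}=\{\text{extinction}\}$, ${\rm P}(W=0)=1/a$) and Nerman's companion result ${\rm e}^{-(a-1)t}B_{a,1,1}(t)\to\frac{a}{a-1}W$; parts (b)(i)--(iii) drop out by splitting on $\{W=0\}$ versus $\{W>0\}$. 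For (b)(iv), the paper notes that on $\{W>0\}$ the single-ancestor process reaches level $cn$ in time $O(\beta_n^{-1}\log n)$, and with $\lceil\log n\rceil$ independent ancestors at least one has $W>0$ with probability tending to $1$; finally $\hat\tau_{\alpha_n,\beta_n,\lceil\log n\rceil}(cn)\le\tau_{\alpha_n,\beta_n,\lceil\log n\rceil}(cn+1)$. This is exactly the ``alternative route'' you sketch at the end.

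Your approach---decoupling into the embedded random walk plus dominated holding times, then using gambler's ruin, Wald's identity, and (for (b)(iv)) a Doob $h$-transform with an explicit first-passage recursion---is more elementary: it avoids quoting the Kesten--Stigum/Athreya--Ney limit theorem and Nerman's result, at the cost of a longer computation for (b)(iv). The paper's route is shorter and more conceptual but leans on those references. Both are fine; your $h$-transform calculation (holding rates preserved because $h$ is harmonic for the generator, upward bias increased because $h$ is increasing, and then the telescoping bound $\sum_j g(j)\le(1-r)^{-1}\cdot O(\beta_n^{-1}\log K_n)$) is clean and would make a nice self-contained appendix.
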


Before proceeding some more notation is required.  For  $k=1,2,\cdots$, let $\chi_k=1_{\{S(r_k)<S(r_k-)\}}$ be the indicator function of the event that
the $k$th point in $\eta$ yields a down jump in $S$.  For $n=1,2,\cdots$ and $k=1,2,\cdots$, let
$w_k^{(n)}=\inf\left\{t \ge r_k^{(n)}: I^{(n)}(t)\ge \log n \mbox{ or } I^{(n)}(t)=0\right\}$ and
$\chi_k^{(n)}=1_{\{I(w_k^{(n)}) \ge \log n\}}$.

\begin{lem}
\label{minoreps}
Suppose that $\bar S^{(n)}(0) \convp s_0$ as $n \to \infty$.  Then the following hold for ${\rm P}$-almost all $\eta$.
\begin{enumerate}
\item[(i)] For $k=1,2,\cdots$,
\begin{align*}
\lim_{n \to \infty}{\rm P}\left(\chi_k^{(n)}=1 \right.&\left.\mbox{ and } \chi_i^{(n)}=0 \mbox{ for all } i < k | \eta\right)\\
&={\rm P}(\chi_k=1 \mbox{ and } \chi_i=0 \mbox{ for all } i < k | \eta).
\end{align*}
\item[(ii)] For $k=1,2,\cdots$, as $n \to \infty$,
\[
\sup_{0 \le t < w_k^{(n)}}\left|\bar S^{(n)}(t)-f(s_0,t)\right|1_{\{\chi_k^{(n)}=1 \mbox{ and } \chi_i^{(n)}=0 \mbox{ for all } i < k \}}
|\eta \convD 0.
\]
\item[(iii)] For $k=1,2,\cdots$, as $n \to \infty$,
\[
w_k^{(n)}1_{\{\chi_k^{(n)}=1 \mbox{ and } \chi_i^{(n)}=0 \mbox{ for all } i < k \}}\convD
r_k 1_{\{\chi_k=1 \mbox{ and } \chi_i=0 \mbox{ for all } i < k \}}.
\]
\end{enumerate}
\end{lem}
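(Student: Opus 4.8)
\textbf{Proof proposal for Lemma~\ref{minoreps}.}

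The plan is to prove parts (i)--(iii) together by induction on $k$, conditioning throughout on a realisation of $\eta$ lying in a set of full ${\rm P}$-measure on which (a) $r_k^{(n)}\to r_k$ for every $k$ (valid since $n\kappa_n\to\kappa$) and (b) $f(s_0,r_k)\ne 1/R_0$ for every $k$ (valid for ${\rm P}$-almost all $\eta$ because, conditionally on its past, each $r_k$ is the next point of a rate-$\mu\kappa$ Poisson process and hence has a continuous law). Write $G_j^{(n)}=\{\chi_i^{(n)}=0\text{ for all }i\le j\}$, $G_j=\{\chi_i=0\text{ for all }i\le j\}$ and $q_i=\max(1-(R_0 f(s_0,r_i))^{-1},0)$; on $G_{j}$ one has $S(r_i-)=f(s_0,r_i)$ for $i\le j+1$, so ${\rm P}(G_j\mid\eta)=\prod_{i=1}^j(1-q_i)$ and the target in part~(i) is $\prod_{i<k}(1-q_i)\,q_k$. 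Since the events $\{\chi_i^{(n)}=1,\chi_l^{(n)}=0\ \forall l<i\}$, $i=1,\dots,j$, are disjoint with union $\Omega\setminus G_j^{(n)}$, summing part~(i) over $i\le j$ shows that the inductive hypothesis ``(i)--(iii) hold for $1,\dots,k-1$'' already gives ${\rm P}(G_{k-1}^{(n)}\mid\eta)\to\prod_{i<k}(1-q_i)$.

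Two ingredients drive the step to $k$. First, a deterministic approximation for the susceptibles \emph{between} outbreaks: on $G_{k-1}^{(n)}$ the susceptible count evolves, apart from the infections, as an immigration--death process with immigration rate $\mu n(1-\kappa_n)$ and per-capita death rate $\mu$, so by the law of large numbers for density-dependent population processes (\cite{EK86}, Chapter~11), $\bar S^{(n)}$ converges uniformly on compact time sets to the solution $f(s_0,\cdot)$ of~\eqref{Sdet}, \emph{provided} the cumulative number of infections up to $r_k^{(n)}$ is $o_p(n)$. Controlling that number is the second ingredient: on $G_{k-1}^{(n)}$ each of the at most $k-1$ minor outbreaks infects $O_p(n^{1/3})=o_p(\sqrt n)$ individuals, and, moreover, each has duration tending to $0$. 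Both facts come from coupling the process of infectives during an outbreak with a linear birth-and-death process $Z_{\alpha_n,\beta_n,1}$ that dominates it (take $\beta_n=\gamma_n+\mu$ and $\alpha_n=\lambda_n(1+\varepsilon)$, legitimate since $\bar S^{(n)}\le 1+\varepsilon$ with high probability on compact sets, and note $\alpha_n/\beta_n\to R_0(1+\varepsilon)>1$, $\log n/\beta_n\to 0$) and applying Lemma~\ref{bdresults}(b)(ii),(b)(iii): while the infective count stays below $\log n$ the dominating process has had fewer than $n^{1/3}$ births and has been running for a time tending to $0$. Combined with $r_k^{(n)}\to r_k$ this yields $\sup_{0\le t\le r_k^{(n)}}|\bar S^{(n)}(t)-f(s_0,t)|\,1_{G_{k-1}^{(n)}}\to 0$ in $\eta$-conditional probability; in particular $\bar S^{(n)}(r_k^{(n)}-)\,1_{G_{k-1}^{(n)}}\to f(s_0,r_k)\,1_{G_{k-1}^{(n)}}$, and, since minor outbreaks terminate before the next importation with probability tending to $1$, $I^{(n)}(r_k^{(n)}-)=0$ on $G_{k-1}^{(n)}$ with high probability, so $I^{(n)}(r_k^{(n)})=1$ there.

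For the inductive step, by the Markov property ${\rm P}(\chi_k^{(n)}=1\mid\mathcal F_{r_k^{(n)}},\eta)$ depends only on the state at $r_k^{(n)}$, and on $G_{k-1}^{(n)}$ it is, up to $o(1)$, a function of $\bar S^{(n)}(r_k^{(n)}-)$ alone. To identify it, fix $\varepsilon>0$ small enough that $f(s_0,r_k)\pm\varepsilon$ lie on the same side of $1/R_0$ as $f(s_0,r_k)$, sandwich $I^{(n)}$ between the birth-and-death processes with death rate $\gamma_n+\mu$ and birth rates $\lambda_n(\bar S^{(n)}(r_k^{(n)}-)\mp\varepsilon)$, valid up to the first time $\tau$ at which $\bar S^{(n)}$ has moved by $\varepsilon$ from its value at $r_k^{(n)}-$ or $I^{(n)}$ reaches $\log n$ or $0$; the $o_p(1)$ bound on infections and the vanishing excursion duration (Lemma~\ref{bdresults}(a)(i) when $f(s_0,r_k)<1/R_0$, (b)(ii) when $f(s_0,r_k)>1/R_0$) show that with high probability $\bar S^{(n)}$ does not move by $\varepsilon$ before $I^{(n)}$ hits $\log n$ or $0$, so the sandwich is in force throughout the excursion. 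Then Lemma~\ref{bdresults}(a)(ii) (subcritical: minor with probability $\to 1$, matching $q_k=0$) and (b)(i) (supercritical: major with probability $\to 1-(R_0(\bar S^{(n)}(r_k^{(n)}-)\pm\varepsilon))^{-1}$), letting $\varepsilon\downarrow 0$ after $n\to\infty$, give ${\rm P}(\chi_k^{(n)}=1\mid\mathcal F_{r_k^{(n)}},\eta)\,1_{G_{k-1}^{(n)}}\to q_k\,1_{G_{k-1}^{(n)}}$ in probability. Taking $\eta$-conditional expectations, bounded convergence together with ${\rm P}(G_{k-1}^{(n)}\mid\eta)\to\prod_{i<k}(1-q_i)$ gives part~(i). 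Part~(iii) follows since, on $G_{k-1}^{(n)}\cap\{\chi_k^{(n)}=1\}$, $w_k^{(n)}-r_k^{(n)}$ is dominated by the time for a supercritical birth-and-death process to reach $\log n$, which tends to $0$ by Lemma~\ref{bdresults}(b)(ii), combined with $r_k^{(n)}\to r_k$ and Slutsky's theorem. Part~(ii) combines the uniform estimate on $[0,r_k^{(n)})$ from the previous paragraph with the fact that over the short interval $[r_k^{(n)},w_k^{(n)})$ the function $\bar S^{(n)}$ changes by at most the normalised number of infections in the excursion, which is $o_p(1)$, while $f(s_0,\cdot)$ changes by $o(1)$ by continuity.

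The main obstacle is the interplay between the fluctuating susceptible fraction and the birth-and-death comparisons: one cannot replace $\bar S^{(n)}(t)$ by a constant during an outbreak, and the stopping-time device above --- stop when $\bar S^{(n)}$ has moved by $\varepsilon$ or the outbreak is resolved, then show the latter occurs first --- is what breaks the apparent circularity. The remaining care is bookkeeping: choosing a single full-measure set of $\eta$'s on which all the above convergences hold, ruling out overlap of a minor outbreak with the next importation (which follows from durations tending to $0$), and discarding the ${\rm P}$-null set of $\eta$ for which some $f(s_0,r_k)=1/R_0$.
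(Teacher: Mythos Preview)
Your proposal follows essentially the same strategy as the paper: condition on $\eta$, control $\bar S^{(n)}$ between outbreaks via the law of large numbers for density-dependent processes, sandwich the infective process during each outbreak between linear birth--and--death processes, invoke Lemma~\ref{bdresults} for the bounding processes, and induct on $k$ using the Markov property.

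The one technical difference worth flagging is how the sandwich is set up. The paper introduces an auxiliary process $\tilde I^{(n)}$ with time-dependent birth rate $\lambda_n\bar S_0^{(n)}(t)$, where $\bar S_0^{(n)}$ is the \emph{undepleted} susceptible immigration--death process; since $\bar S_0^{(n)}$ is unaffected by the epidemic, the sandwich for $\tilde I^{(n)}$ holds on a deterministic interval $[0,\hat t]$, and Lemma~\ref{bdresults}(b)(iii) then bounds the births of $\tilde I^{(n)}$ up to its own resolution time $\tilde D^{(n)}$ by $n^{1/3}$. A separate thinning/coincidence argument then shows $I^{(n)}=\tilde I^{(n)}$ on $[0,\tilde D^{(n)}]$ with high probability, transferring both the size bound and the probability computation to $I^{(n)}$. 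Your route instead sandwiches $I^{(n)}$ directly, with the sandwich valid only up to the stopping time $\tau$; this is legitimate, but your appeal to Lemma~\ref{bdresults}(b)(iii) for the $O_p(n^{1/3})$ bound on minor-outbreak infections is slightly imprecise, since that lemma controls births in the dominating process up to \emph{its} resolution time, which need not coincide with $I^{(n)}$'s when the dominating process reaches $\log n$ first while $I^{(n)}$ lags. The discrepancy has probability $O(\varepsilon)$ under the joint coupling and vanishes on letting $\varepsilon\downarrow 0$ after $n\to\infty$, so your argument goes through; the paper's coincidence device simply sidesteps this extra bookkeeping.
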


\begin{proof}
For ease of presentation we suppress explicit conditioning on $\eta$ in the proof.  First note that ${\rm P}\left(S(r_1-)=R_0^{-1}\right)=0$, since $r_1$ is a realisation of a continuous random variable.  Assume without loss of generality that there is no recovered individual at time $t=0$.  For $t \ge 0$, let $S^{(n)}_0(t)$ be the number of susceptibles at time $t$ under the assumption that the immigration rate for susceptibles is $\mu n (1-\kappa_n)$ and the immigration rate for infectives is $0$, and let $\bar S^{(n)}_0(t)=S^{(n)}_0(t)/n$.   Then, for any $t > 0$, application of Theorem 11.2.1 of~\cite{EK86} (using the more general definition of a density dependent family given by equation (11.1.13) of that book) yields that, for any $\epsilon>0$,
\begin{equation}
\label{kurtzsus0}
\lim_{n \to \infty} {\rm P}\left(\sup_{0 \le u \le t}\left|\bar S^{(n)}_0(u)-f(s_0,u)\right| < \epsilon\right)=1.
\end{equation}

Recall that $E^{(n)}$ denote the epidemic process with average population size $n$.
Consider the epidemic initiated by the immigration of an infective at time $r_1^{(n)}$ in $E^{(n)}$ and let $\hat{s}_1^{(n)}=\bar S^{(n)}(r_1^{(n)})$.  For ease of exposition, translate the
time axis of $E^{(n)}$ so that the origin corresponds to $r_1^{(n)}$.
With this new time origin, $\left\{I^{(n)}(t):t \ge 0\right\}$ can be approximated by a linear birth-and-death process $\left\{\tilde{I}^{(n)}(t):t \ge 0\right\}$ having death rate $\gamma_n+\mu$ and (random) time-dependent birth rate  given by $\lambda_n \bar S^{(n)}_0(t)$.  This approximation ignores depletion in the number of susceptibles owing to infection, 
so $\left\{\tilde{I}^{(n)}(t):t \ge 0\right\}$ is an upper bound for $\left\{I^{(n)}(t):t \ge 0\right\}$.

Let $\hat{s}_1=f(s_0,r_1)$ and fix $\epsilon \in (0, \hat{s}_1)$.  Note that, with the change of origin, $\bar S^{(n)}_0(0)=\hat{s}_1^{(n)}$.  Then, using~\eqref{kurtzsus0}, for any $\delta \in (0,1)$, there exists $\hat{t}=\hat{t}(\epsilon,\delta)>0$ and
$n_0=n_0(\epsilon,\delta)$ such that
\begin{equation}
\label{kurtzsushat}
{\rm P}\left(\sup_{0 \le t \le \hat{t}}\left|\bar S^{(n)}_0(t)-\hat{s}_1\right| < \frac{\epsilon}{2}\right)\ge 1-\frac{\delta}{2} \quad \mbox{ for all }n \ge n_0.
\end{equation}
For $n \ge n_0$ and $0 \le t \le \hat{t}$ , with probability at least $1-\frac{\delta}{2}$, the process $\left\{\tilde{I}^{(n)}(t):t \ge 0\right\}$ is bounded below and above by the birth-and-death processes $Z_{\tilde{\alpha}_n^-(\epsilon),\beta_n,1}$ and $Z_{\tilde{\alpha}_n^+(\epsilon),\beta_n,1}$, respectively, where $\tilde{\alpha}_n^-(\epsilon)=\lambda_n(\hat{s}_1-\frac{\epsilon}{2}), \tilde{\alpha}_n^+(\epsilon)=\lambda_n(\hat{s}_1+\frac{\epsilon}{2})$ and $\beta_n=\gamma_n+\mu$.  Further, since
$\lim_{n \to \infty} \tilde{\alpha}_n^-(\epsilon)/\beta_n=R_0(\hat{s}_1-\frac{\epsilon}{2})$, for all sufficiently large $n$, the birth-and-death process $Z_{\tilde{\alpha}_n^-(\epsilon),\beta_n,1}$ is bounded below by the birth-and-death process $Z_{\alpha_n^-(\epsilon),\beta_n,1}$, where $\alpha_n^-(\epsilon)=R_0(\hat{s}_1-\epsilon)\beta_n$.  Similarly,
for all sufficiently large $n$, the birth-and-death process $Z_{\tilde{\alpha}_n^+(\epsilon),\beta_n,1}$ is bounded above by the birth-and-death process $Z_{\alpha_n^+(\epsilon),\beta_n,1}$, where $\alpha_n^+(\epsilon)=R_0(\hat{s}_1+\epsilon)\beta_n$.

Suppose first that $R_0 \hat{s}_1 <1$.  Then for all $\epsilon \in (0,\epsilon_0)$, where $\epsilon_0=R_0^{-1}-\hat{s}_1$, the birth-and-death process $Z_{\alpha_n^+(\epsilon),\beta_n,1}$ is subcritical, so by Lemma~\ref{bdresults}(a)(i), for all $t>0$,
\begin{equation}
\label{z+dur}
\lim_{n \to \infty} {\rm P}\left(\tau_{\alpha_n^+(\epsilon),\beta_n,1}(0)\le t\right)=1.
\end{equation}
Setting $t=\hat{t}$ shows that, for all sufficiently large $n$, with probability at least $1-\delta$, $\{\tilde{I}^{(n)}(t):t \ge 0\}$, and hence also $\{I^{(n)}(t):t \ge 0\}$, is bounded above by $Z_{\alpha_n^+(\epsilon),\beta_n}$ throughout its entire lifetime. Thus,
\begin{eqnarray*}
\liminf_{n \to \infty} {\rm P}\left(\chi_1^{(n)}=0\right) &\ge&
\liminf_{n \to \infty} {\rm P}\left(Z_{\alpha_n^+(\epsilon),\beta_n,1}(t) < \log n \quad \mbox{for all } t \ge 0\right)-\delta\\
&=&1-\delta,
\end{eqnarray*}
by Lemma~\ref{bdresults}(a)(ii).
Hence, since $\delta\in(0,1)$ is arbitrary,
\begin{equation*}
\lim_{n \to \infty}{\rm P}\left(\chi_1^{(n)}=0\right)=1={\rm P}\left(\chi_1=0\right).
\end{equation*}
Let $D^{(n)}=\inf\left\{t>0:I^{(n)}(t)\ge \log n\mbox{ or } I^{(n)}(t)=0\right\}$.  Then it follows using~\eqref{z+dur} that $D^{(n)} \convp 0$ as $n \to \infty$.

Suppose instead that $R_0 \hat{s}_1 >1$. Fix $\epsilon \in (0,\epsilon_1)$, where $\epsilon_1=\hat{s}_1-R_0^{-1}$, and $\delta \in (0,1)$. Then, similar to above, there exists $t_1$ such that, for all sufficiently large $n$, with probability at least $1-\frac{\delta}{2}$, $\left\{\tilde{I}^{(n)}(t):t \ge 0\right\}$ is bounded above and below by $Z_{\alpha_n^+(\epsilon),\beta_n}$
$Z_{\alpha_n^-(\epsilon),\beta_n}$, respectively, throughout the interval $[0,t_1]$.  For $x>0$, let $\tilde{\tau}^{(n)}(x)=\inf\left\{t>0:\tilde{I}^{(n)}(t)\ge x\right\}$, $\tilde{\tau}^{(n)}(0)=\inf\left\{t>0:\tilde{I}^{(n)}(t)=0\right\}$ and
$\tilde{D}^{(n)}=\inf\left\{t>0:\tilde{I}^{(n)}(t)\ge \log n\mbox{ or } \tilde{I}^{(n)}(t)=0\right\}$.  Note that the birth-and-death processes $Z_{\alpha_n^-(\epsilon),\beta_n,1}$ and $Z_{\alpha_n^+(\epsilon),\beta_n,1}$ are both supercritical. Then, by Lemma~\ref{bdresults}(b)(ii), for all sufficiently large $n$, the process $\left\{\tilde{I}^{(n)}(t):t \ge 0\right\}$ is bounded below and above by $Z_{\alpha_n^+(\epsilon),\beta_n}$
$Z_{\alpha_n^-(\epsilon),\beta_n}$, respectively, throughout the interval $[0,\tilde{D}^{(n)}]$. Using Lemma~\ref{bdresults}(b)(i), it then follows that
\begin{align}
\label{lowbound}
\liminf_{n \to \infty}{\rm P}&\left(\tilde{\tau}^{(n)}(\log n)< \tilde{\tau}^{(n)}(0)\right) \nonumber\\
&\ge\liminf_{n \to \infty}{\rm P}\left(\tau_{\alpha_n^-(\epsilon),\beta_n,1}(\log n) < \tau_{\alpha_n^-(\epsilon),\beta_n,1}(0)\right)-\delta\nonumber\\
&=1-\frac{1}{R_0(\hat{s}_1-\epsilon)}-\delta
\end{align}
and
\begin{align}
\label{upbound}
\limsup_{n \to \infty}{\rm P}&\left(\tilde{\tau}^{(n)}(\log n)< \tilde{\tau}^{(n)}(0)\right) \nonumber\\
&\le\limsup_{n \to \infty}{\rm P}\left(\tau_{\alpha_n^+(\epsilon),\beta_n,1}(\log n) < \tau_{\alpha_n^-(\epsilon),\beta_n,1}(0)\right)-\delta\nonumber\\
&=1-\frac{1}{R_0(\hat{s}_1+\epsilon)}-\delta.
\end{align}
Letting both $\epsilon$ and $\delta$ converge down to $0$ in~\eqref{lowbound} and~\eqref{upbound} yields
\begin{eqnarray}
\label{intildeconv}
\lim_{n \to \infty} {\rm P}\left(\tilde{\tau}^{(n)}(\log n)< \tilde{\tau}^{(n)}(0)\right)&=&
1-\frac{1}{R_0\hat{s}_1}\nonumber\\
&=&{\rm P}\left(\chi_1=1\right).
\end{eqnarray}
Further, using Lemma~\ref{bdresults}(b)(ii), it follows that
\begin{equation}
\label{dntildeconv}
\tilde{D}^{(n)}\convp 0 \quad \mbox{as } n \to \infty.
\end{equation}

Recall that $\left\{\tilde{I}^{(n)}(t):t \ge 0\right\}$ is an upper bound for $\left\{I^{(n)}(t):t \ge 0\right\}$.  We now show that the probability that the two processes coincide over $[0, \tilde{D}^{(n)}]$ converges to one as $n\to \infty$.  In $\left\{\tilde{I}^{(n)}(t):t \ge 0\right\}$ births occur at time-dependent rate  $\lambda_n \bar S^{(n)}_0(t)$, whilst in $\left\{I^{(n)}(t):t \ge 0\right\}$ infections occur at time-dependent rate  $\lambda_n \bar S^{(n)}(t)$.  Now $\bar S^{(n)}_0(t) \ge \bar S^{(n)}(t)$ for all $t \ge 0$, almost surely, so the two processes can be coupled by using an independent sequence $U_1,U_2,\cdots$ of independent and identically distributed random variables that are uniformly distributed on $(0,1)$, with the $i$th birth in  $\left\{\tilde{I}^{(n)}(t):t \ge 0\right\}$ (which occurs at time $t_i$ say) yielding an infection in $\left\{I^{(n)}(t):t \ge 0\right\}$ if and only if $U_i \le \bar S^{(n)}(t_i)/\bar S^{(n)}_0(t_i)$.

For $n=1,2,\cdots$ and $t>0$, let $\tilde{B}^{(n)}(t)$ be the total number of births in $\left\{\tilde{I}^{(n)}(t):t \ge 0\right\}$ during $(0,t]$. Recall that the probability that  $\left\{\tilde{I}^{(n)}(t):t \ge 0\right\}$ is sandwiched between the supercritical birth-and-death processes $Z_{\alpha_n^+(\epsilon),\beta_n}$ and $Z_{\alpha_n^-(\epsilon),\beta_n}$ throughout $[0, \tilde{D}^{(n)}]$ converges to one as $n\to \infty$.  It then follows using Lemma~\ref{bdresults}(b)(iii) that
\begin{equation}
\label{limpBDn}
\lim_{n \to \infty}{\rm P}\left(\tilde{B}^{(n)}(\tilde{D}^{(n)})\ge n^{\frac{1}{3}}\right)=0.
\end{equation}
Also, since $\tilde{D}^{(n)}\convp 0$ as $n \to \infty$, it follows using~\eqref{kurtzsushat} that, for any $\epsilon >0$,
\begin{equation}
\label{limbarS>s1-ep}
\lim_{n \to \infty}{\rm P}\left(\bar S^{(n)}_0(t)>\hat{s}_1-\epsilon \mbox{  for all } t \in [0, \tilde{D}^{(n)}]\right)=1.
\end{equation}

Suppose that $\tilde{B}^{(n)}(\tilde{D}^{(n)}) < n^{\frac{1}{3}}$ and, for fixed $\epsilon \in (0,\hat{s}_1)$, $\bar S^{(n)}_0(t)>\hat{s}_1-\frac{\epsilon}{2}$ for all $t \in [0, \tilde{D}^{(n)}]$.  Then, $S^{(n)}(t_i) \ge S^{(n)}_0(t_i)- n^{\frac{1}{3}}$, for $i=1,2,\cdots, \tilde{B}^{(n)}(\tilde{D}^{(n)})$, so if $p_i^{(n)}$ denotes the probability that the $i$th birth in  $\left\{\tilde{I}^{(n)}(t):t \ge 0\right\}$ yields an infection in $\left\{I^{(n)}(t):t \ge 0\right\}$, then 
\[
p_i^{(n)}=\frac{S^{(n)}(t_i)}{S^{(n)}_0(t_i)}\ge 1-\frac{n^{\frac{1}{3}}}{S^{(n)}_0(t_i)} \ge 1-\frac{n^{-\frac{2}{3}}}{\hat{s}_1-\epsilon},
\]
whence
\begin{eqnarray*}
\prod_{i=1}^{\tilde{B}^{(n)}(\tilde{D}^{(n)})} p_i^{(n)} &\ge& \left(1-\frac{n^{-\frac{2}{3}}}{\hat{s}_1-\epsilon}\right)^{\tilde{B}^{(n)}(\tilde{D}^{(n)})}\\
&\ge&\left(1-\frac{n^{-\frac{2}{3}}}{\hat{s}_1-\epsilon}\right)^{n^{\frac{1}{3}}}\\
&\ge& 1-\frac{n^{-\frac{1}{3}}}{\hat{s}_1-\epsilon}\\
&\to& 1\quad\mbox{as }n \to \infty.
\end{eqnarray*}

Thus, recalling~\eqref{limpBDn} and~\eqref{limbarS>s1-ep}, the probability that $\left\{I^{(n)}(t):t \ge 0\right\}$ and $\left\{\tilde{I}^{(n)}(t):t \ge 0\right\}$ coincide over $[0,\tilde{D}^{(n)}]$ converges to one as $n \to \infty$, which, together with~\eqref{intildeconv}, yields
\begin{equation*}
\lim_{n \to \infty}{\rm P}\left(\chi_1^{(n)}=1\right)=1={\rm P}\left(\chi_1=1\right),
\end{equation*}
and, together with~\eqref{dntildeconv}, yields
\begin{equation*}
{D}^{(n)}\convp 0 \quad \mbox{as } n \to \infty.
\end{equation*}

We have thus proved parts (i) and (iii) for $k=1$.  Note that, since ${\rm P}\left(\chi_k=0\mbox{ for all }k=1,2,\cdots\right)=0$, when reverting to the original time axis, the probability that the total number of individuals infected during $[0, w_1^{(n)}]$ in $E^{(n)}$ is less than $n^{\frac{5}{12}}$ tends to one as $n \to \infty$, which combined with~\eqref{kurtzsus0} proves part (ii) when $k=1$.  Parts (i), (ii) and (iii) for $k>1$ follow easily by induction since the processes $\left\{(S^{(n)}(t),I^{(n)}(t)):t \ge 0\right\}$ ($n=1,2,\cdots$) and $S$ are Markov.

\end{proof}

Before proceeding we state some well-known facts about the final outcome of the deterministic general epidemic (e.g.~\cite{AB00a} Chapter 1.4).  For $t \ge 0$, let $s(t)$ and $i(t)$ denote respectively the density of susceptibles and infectives at time $t$, so $(s(t),i(t))$ are determined by the differential equations
\begin{equation}
\label{gde0}
\dfrac{ds}{dt}=-R_0 s i,\qquad \dfrac{di}{dt}=R_0 s i- i,
\end{equation}
with initial condition $\left(s(0),i(0)\right)=(s_0,i_0)$, where $s(0)>0$ and $i(0)>0$.  Note that time is scaled so that the recovery rate is $1$.  Then $s(t)$ decreases with $t$, $\lim_{t \to \infty}i(t)=0$ and
$\lim_{t \to \infty}s(t)=s_{\infty}(s_0,i_0)$, where $s_{\infty}(s_0,i_0)$ is the unique solution in $(0,1)$ of
\[
s_{\infty}=s_0{\rm e}^{-R_0(s_0+i_0-s_{\infty})}.
\]
Note that $s_{\infty}$ is continuous in $(s_0,i_0)$ and $s_{\infty}(s_0,i_0) \to s_{\infty}(s_0,0)$ as $i_0 \downarrow 0$, where (recall~\eqref{tau})
\begin{equation*}
s_{\infty}(s_0,0)=\left\{
  \begin{array}{l l}
   0 & \quad \text{if } R_0 s_o \le 1,\\
   s_0(1-\tau(s_0)) & \quad \text{if } R_0 s_o > 1.
  \end{array} \right.
\end{equation*}

In the following two lemmas, there is no importation of infectives in $\left\{\left(S^{(n)}(t),I^{(n)}(t)\right):t \ge 0\right\}$, though births of susceptibles still occur at rate $\mu n (1-\kappa_n)$.  For $t \ge 0$, let $\bar I^{(n)}(t)=n^{-1}I^{(n)}(t)$.

\begin{lem}
\label{majoreps0}
Suppose that $\left(\bar S^{(n)}(0),\bar I^{(n)}(0)\right) \convp (s_0,i_0)$ as $n \to \infty$, where $s_0>\frac{1}{R_0}$ and $i_0>0$.  Let $u^{(n)}_1=\inf\{t>0:I^{(n)}(t)=0\}$.
Then, as $n \to \infty$,
\begin{enumerate}
\item[(i)] $\bar S^{(n)}(u^{(n)}_1) \convp s_{\infty}(s_0,i_0)$,
\item[(ii)] $u^{(n)}_1 \convp 0$.
\end{enumerate}
\end{lem}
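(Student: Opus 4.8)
The governing intuition is the separation of time scales: $\gamma_n\to\infty$ while $\mu$ is fixed, so an outbreak is infinitely fast relative to births and deaths, and on a suitably rescaled clock it should converge to the deterministic general epidemic~\eqref{gde0}. Moreover, since $\bar I^{(n)}(0)\to i_0>0$, there are $\Theta(n)$ infectives present from the outset, so there is no branching-process start-up phase to treat and a law of large numbers applies immediately. Concretely, I would put $\tilde S^{(n)}(\tau)=\bar S^{(n)}(\tau/\gamma_n)$ and $\tilde I^{(n)}(\tau)=\bar I^{(n)}(\tau/\gamma_n)$; on the $\tau$-clock, $(\tilde S^{(n)},\tilde I^{(n)})$ is a Markov population process on $n^{-1}\mathbb{Z}_+^2$ of the general density-dependent form of~\cite{EK86}, eq.~(11.1.13): the infection jump $(-1,+1)$ has rate coefficient $(\lambda_n/\gamma_n)\bar s\bar i\to R_0\bar s\bar i$, the pooled death/recovery jump $(0,-1)$ has coefficient $(1+\mu/\gamma_n)\bar i\to\bar i$, and the demographic jumps $(+1,0)$ and $(-1,0)$ have coefficients $\mu(1-\kappa_n)/\gamma_n\to0$ and $(\mu/\gamma_n)\bar s\to0$. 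With $(\bar S^{(n)}(0),\bar I^{(n)}(0))\convp(s_0,i_0)$, Theorem~11.2.1 of~\cite{EK86} then yields, for every fixed $\tau^*>0$,
\[
\sup_{0\le\tau\le\tau^*}\bigl|(\tilde S^{(n)}(\tau),\tilde I^{(n)}(\tau))-(s(\tau),i(\tau))\bigr|\convp0\qquad(n\to\infty),
\]
where $(s,i)$ is the solution of~\eqref{gde0} started at $(s_0,i_0)$.

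I would then invoke the deterministic facts recalled just before the lemma. Since $s_0>R_0^{-1}$, we have $i(\tau)\downarrow0$, $s(\tau)\downarrow s_\infty(s_0,i_0)$ and $s_\infty(s_0,i_0)<R_0^{-1}$. So, given $\epsilon>0$, fix $\tau^*$ with $i(\tau^*)<\epsilon$ and $s_\infty(s_0,i_0)<s(\tau^*)<s_\infty(s_0,i_0)+\epsilon$, and then $\delta>0$ with $s(\tau^*)<R_0^{-1}-\delta$. By the display above, the event $G_n=\{\tilde I^{(n)}(\tau^*)\le2\epsilon n,\ \tilde S^{(n)}(\tau^*)<R_0^{-1}-\tfrac34\delta\}$ has probability tending to $1$.

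The remaining work, and the crux of the proof, is the tail of the outbreak after real time $\tau^*/\gamma_n$, where the Kurtz limit no longer controls the process. Fix a small $\rho>0$. The number of susceptible births over $[\tau^*/\gamma_n,\tau^*/\gamma_n+\rho]$ is stochastically at most a Poisson variable of mean $\mu n(1-\kappa_n)\rho$, hence below $\tfrac14\delta n$ with probability tending to $1$ once $\rho$ is small; on that further event, $\bar S^{(n)}$ stays below $R_0^{-1}-\tfrac12\delta$ throughout the interval, so there $I^{(n)}$ is dominated by the linear birth-and-death process $Z_{\alpha_n,\beta_n,\lceil2\epsilon n\rceil}$ with $\beta_n=\gamma_n+\mu$ and $\alpha_n=a\beta_n$ for a suitable constant $a\in(0,1)$ (using $\lambda_n/\gamma_n\to R_0$). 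This process is subcritical and $\log n/\beta_n\to0$, so by Lemma~\ref{bdresults}(a)(iii) its duration $\convp0$; in particular it, and hence $I^{(n)}$, is extinct by real time $\tau^*/\gamma_n+\rho$ with probability tending to $1$. Thus $u^{(n)}_1\le\tau^*/\gamma_n+\rho$ with probability tending to $1$, and as $\tau^*/\gamma_n\to0$ and $\rho>0$ is arbitrary, $u^{(n)}_1\convp0$, which is~(ii). For~(i), $\bar S^{(n)}(u^{(n)}_1)$ differs from $\tilde S^{(n)}(\tau^*)$ only through the infections, susceptible deaths and susceptible births occurring on $[\tau^*/\gamma_n,u^{(n)}_1]$: the infections number at most the total number of births of the dominating birth-and-death process, whose expectation is $O(\epsilon n)$, while the demographic contributions are $O_p(\rho n)$ since that interval has length at most $\rho$ with probability tending to $1$ and the demographic rates are of order $n$. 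Hence $\bar S^{(n)}(u^{(n)}_1)=\tilde S^{(n)}(\tau^*)+O_p(\epsilon)+O_p(\rho)+o_p(1)$, and since $\tilde S^{(n)}(\tau^*)\convp s(\tau^*)\in(s_\infty(s_0,i_0),s_\infty(s_0,i_0)+\epsilon)$, letting $n\to\infty$ and then $\epsilon,\rho\downarrow0$ gives $\bar S^{(n)}(u^{(n)}_1)\convp s_\infty(s_0,i_0)$, which is~(i).

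The main obstacle is exactly this passage from the compact-interval Kurtz limit to the full, $n$-dependent length of the outbreak: one must show the $O_p(\epsilon n)$ infectives left at $\tau^*$ burn out in $o_p(1)$ real time without $\bar S^{(n)}$ recovering above $R_0^{-1}$. This is where the hypothesis $\lambda_n/\log n\to\infty$ (equivalently $\log n/\gamma_n\to0$) is needed, via Lemma~\ref{bdresults}(a)(iii), and where one must be careful about the order of steps --- bounding the susceptible births on a fixed short interval first --- so that the subcritical domination holds on an event of probability tending to one, rather than being presupposed.
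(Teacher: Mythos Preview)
Your proposal is correct and follows essentially the same route as the paper: rescale time by $\gamma_n$, apply a density-dependent law of large numbers to reach a state with $\bar S^{(n)}$ close to $s_\infty$ and $\bar I^{(n)}$ small, then dominate the tail of infectives by a subcritical birth-and-death process via Lemma~\ref{bdresults}(a)(iii) while controlling the demographic drift on a short real-time window. The only notable difference is that the paper invokes Pollett's (1990) \emph{asymptotically} density-dependent framework rather than Theorem~11.2.1 of~\cite{EK86} directly, since the rescaled rate coefficients depend on $n$ (through $\lambda_n/\gamma_n$ and $\mu/\gamma_n$) and merely converge to the limiting coefficients; you should check that whichever version you cite accommodates this $n$-dependence.
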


\begin{proof}
For $n=1,2,\cdots$ and $t>0$, let $\tilde{S}^{(n)}(t)=S^{(n)}(t/\gamma_n)$ and $\tilde{I}^{(n)}(t)=I^{(n)}(t/\gamma_n)$.
Let $\mathbf{X}^{(n)}=\left\{\mathbf{X}^{(n)}(t): t \ge 0\right\}$, where $\mathbf{X}^{(n)}(t)=\left(\tilde{S}^{(n)}(t),\tilde{I}^{(n)}(t)\right)$.  The process $\mathbf{X}^{(n)}$ is a continuous-time Markov chain with transition intensities
\begin{align*}
q^{(n)}_{(s,i),(s+1,i)}&=n\left[\frac{(1-\kappa_n)\mu}{\gamma_n}\right],\\
q^{(n)}_{(s,i),(s-1,i)}&=n\left[\frac{\mu}{\gamma_n}\frac{s}{n}\right],\\
q^{(n)}_{(s,i),(s-1,i+1)}&=n\left[R_0\frac{s}{n}\frac{i}{n}+\left(\frac{\lambda_n}{\gamma_n}-R_0\right)\frac{s}{n}\frac{i}{n}\right],\\
q^{(n)}_{(s,i),(s,i-1)}&=n\left[\frac{i}{n}+\frac{\mu}{\gamma_n}\frac{i}{n}\right],
\end{align*}
corresponding to a birth of a susceptible, a death of a susceptible, an infection of a susceptible, and a recovery or death of an infective, respectively.

The transition intensities are written in the above form to indicate that the family of processes $\left\{\mathbf{X}^{(n)}:n=1,2,\cdots\right\}$ is asymptotically density dependent, as defined by~\cite{Pollett90}.
Let $E$ be any compact subset of $[0,\infty)^2$.  Recall that $\kappa_n \to 0, \gamma_n \to \infty$ and $\frac{\lambda_n}{\gamma_n} \to R_0$ as $n \to \infty$.  Hence, as $n \to \infty$, each of $\frac{(1-\kappa_n)\mu}{\gamma_n}$, $\sup_{(x,y)\in E}\frac{\mu}{\gamma_n}x$, $\sup_{(x,y)\in E} \left(\frac{\lambda_n}{\gamma_n}-R_0\right)xy$ and $ \sup_{(x,y)\in E}\frac{\mu}{\gamma_n}y$ converges to $0$. It follows that the conditions of Theorem 3.1 in~\cite{Pollett90} are satisfied, whence,
for any $\epsilon>0$ and any $t>0$,
\begin{equation}
\label{kurtzepi}
\lim_{n \to \infty}{\rm P}\left(\sup_{0\le u \le t}\left|\frac{1}{n}\mathbf{X}^{(n)}(t)-\mathbf{x}(t)\right|<\epsilon\right)=1,
\end{equation}
where $\mathbf{x}(t)=\left(s(t),i(t)\right)$ is the solution of the deterministic general epidemic~\eqref{gde0} having initial condition
$\left(s(0),i(0)\right)=(s_0,i_0)$.
Write $s_\infty$ for $s_{\infty}(s_0,i_0)$.  There exists $\epsilon_0>0$ such that $R_0(s_\infty+\epsilon_0)<1$, since otherwise $\lim_{t \to \infty} i(t)$ would be strictly positive.  Given $\epsilon \in (0, \epsilon_0)$, choose $\epsilon'>0$ so that
\begin{equation}
\label{epsilon'}
\epsilon'\frac{R_0(s_\infty+\epsilon_0)}{1-R_0(s_\infty+\epsilon_0)}<\frac{\epsilon}{8}.
\end{equation}
There exists $t_1>0$ such that $ i(t_1)<\epsilon'$ and $ s(t_1)\in[s_\infty,s_\infty+\frac{\epsilon}{3})$. Then~\eqref{kurtzepi} implies that
\[
\lim_{n \to \infty}{\rm P}\left(\left|\frac{1}{n} \tilde{S}^{(n)}(t_1)-s_\infty\right|<\frac{\epsilon}{2}\right)=1
\quad\mbox{and}\quad
\lim_{n \to \infty}{\rm P}\left(\frac{1}{n} \tilde{I}^{(n)}(t_1)<\frac{3}{2}\epsilon'\right)=1,
\]
so, reverting to the original time scale and letting $t_n=t_1/\gamma_n$,
\begin{equation}
\label{sitnbounds}
\lim_{n \to \infty}{\rm P}\left(\left|\bar{S}^{(n)}(t_n)-s_\infty\right|<\frac{\epsilon}{2}\right)=1
\quad\mbox{and}\quad
\lim_{n \to \infty}{\rm P}\left(\bar {I}^{(n)}(t_n)<\frac{3}{2}\epsilon'\right)=1.
\end{equation}

Observe that, whilst $\bar S^{(n)}(t_n+t) \le s_\infty+\epsilon$, the process $\left\{I^{(n)}(t_n+t):t \ge 0\right\}$ is bounded above by the birth-and-death process $Z_{\tilde{\alpha}_n,\beta_n,\left \lceil{\frac{3}{2}\epsilon' n} \right \rceil}$, where $\tilde{\alpha}_n=(s_\infty+\epsilon)\lambda_n$ and $\beta_n=\gamma_n+\mu$.  Now
$\tilde{\alpha}_n/\beta_n \to R_0(s_\infty+\epsilon)$ as $n \to \infty$, so, for all sufficiently large $n$, $Z_{\tilde{\alpha}_n,\beta_n,\left \lceil{\frac{3}{2}\epsilon' n} \right \rceil}$ is in turn bounded above
by $Z_{\alpha_n,\beta_n,\left \lceil{\frac{3}{2}\epsilon' n} \right \rceil}$, where $\alpha_n= R_0(s_\infty+\epsilon_0)\beta_n$.

Recall that $B_{\alpha,\beta,k}$ and $\tau_{\alpha,\beta,k}(0)$ denote the total number of births in and the extinction time of $Z_{\alpha,\beta,k}$, respectively.  Then
\[
{\rm E}\left[B_{\alpha_n,\beta_n,1}\right]=\frac{R_0(s_\infty+\epsilon_0)}{1-R_0(s_\infty+\epsilon_0)},
\]
and, recalling~\eqref{epsilon'}, application of the strong law of large numbers yields
\begin{equation}
\label{bbounds}
\lim_{n \to \infty} {\rm P}\left(\frac{1}{n}B_{\alpha_n,\beta_n,\left \lceil{\frac{3}{2}\epsilon' n}\right \rceil} < \frac{\epsilon}{4}\right)=1.
\end{equation}
Also, Lemma~\ref{bdresults}(a)(iii) implies that
\begin{equation}
\label{bdext}
\tau_{\alpha_n,\beta_n,\left \lceil{\frac{3}{2}\epsilon' n}\right \rceil}(0) \convp 0 \quad\mbox{as }  n \to \infty.
\end{equation}

Recall that $\left\{S^{(n)}_0(t):t \ge 0\right\}$ denotes the process that describes the number of susceptibles in the absence of any infectives
and suppose that $S^{(n)}_0(0)=S^{(n)}(t_n)$.
For $t \ge 0$, let $B^{(n)}_0(t)$ and $D^{(n)}_0(t)$ be the total number of births and deaths, respectively, during $(0,t]$ in $\left\{ S^{(n)}_0(t):t \ge 0\right\}$.  Using~\eqref{kurtzsus0} and the fact that $B^{(n)}_0(t)$ has a Poisson distribution with mean $n \mu t$, there exists $\hat{t}=\hat{t}(\epsilon)>0$ such that
\begin{equation}
\label{bdsbounds0}
\lim_{n \to \infty} {\rm P}\left(B_0^{(n)}(\hat{t})<\frac{n \epsilon}{4}\right)=1 \quad\mbox{and}\quad\lim_{n \to \infty} {\rm P}\left(D_0^{(n)}(\hat{t})<\frac{n \epsilon}{4}\right)=1.
\end{equation}

The processes $\left\{\left(S^{(n)}(t_n+t),I^{(n)}(t_n+t)\right):t \ge 0\right\}$ and $Z_{\alpha_n,\beta_n,\left \lceil{\frac{3}{2}\epsilon' n} \right \rceil}$ can be coupled so that $I^{(n)}(t_n+t) \le Z_{\alpha_n,\beta_n,\left \lceil{\frac{3}{2}\epsilon' n} \right \rceil}(t)$
whilst $\bar S^{(n)}(t_n+t) \le s_\infty +\epsilon$.  The first equations in~\eqref{sitnbounds} and ~\eqref{bdsbounds0} imply that
\[
\lim_{n \to \infty}{\rm P}\left(\sup_{0 \le t \le \hat{t}}\bar S^{(n)}(t_n+t)\le s_\infty +\epsilon\right)=1,
\]
so~\eqref{bdext} implies that, with probability tending to $1$ as $n \to \infty$, the coupling holds thoughout the lifetime of
$Z_{\alpha_n,\beta_n,\left \lceil{\frac{3}{2}\epsilon' n} \right \rceil}$.  Recall that $u^{(n)}_1=\inf\{t>0:I^{(n)}(t)=0\}$.  The
coupling implies that $u^{(n)}_1-t_n \le \tau_{\alpha_n,\beta_n,\left \lceil{\frac{3}{2}\epsilon' n}\right \rceil}$, so part (ii) of the lemma follows from~\eqref{bdext}, since $t_n \to 0$ as $n \to \infty$. Further, $S^{(n)}(u^{(n)}_1)$ is at most the sum of $S^{(n)}(t_n)$ and the number of births in $(t_n,u^{(n)}_1]$, and at least the difference between $S^{(n)}(t_n)$ and the sum of
the number of susceptible deaths in $(t_n,u^{(n)}_1]$ and $B_{\alpha_n,\beta_n,\left \lceil{\frac{3}{2}\epsilon' n}\right \rceil}$,
so~\eqref{sitnbounds},~\eqref{bbounds} and~\eqref{bdsbounds0} imply that
\[
\lim_{n \to \infty}{\rm P}\left(s_\infty-\epsilon<\bar S^{(n)}(u^{(n)}_1)<s_\infty+\frac{3}{4}\epsilon\right)=1,
\]
proving part (i) of the lemma, since $\epsilon \in (0, \epsilon_0)$ can be arbitrarily small.
\end{proof}

\begin{lem}
\label{majoreps}
Suppose that $I^{(n)}(0)=\left \lceil{\log n} \right \rceil$ $(n=1,2,\cdots)$ and \newline $\bar S^{(n)}(0) \convp s_0$ as $n \to \infty$, where $s_0>\frac{1}{R_0}$.  Let $u^{(n)}_1=\inf\left\{t>0:I^{(n)}(t)=0\right\}$,
\[
c^{(n)}_1=\min_{0 \le t \le u^{(n)}_1} \bar S^{(n)}(t) \quad \mbox{and} \quad \tilde{c}^{(n)}_1=\max_{0 \le t \le u^{(n)}_1} \bar S^{(n)}(t).
\]
Then, as $n \to \infty$,
\begin{enumerate}
\item[(i)] $\bar S^{(n)}(u^{(n)}_1) \convp s_0(1-\tau(s_0))$, where the function $\tau(s)$ is defined at~\eqref{tau};
\item[(ii)] $u^{(n)}_1 \convp 0$;
\item[(iii)] $c^{(n)}_1 \convp s_0(1-\tau(s_0))$ and $\tilde{c}^{(n)}_1 \convp s_0$.
\end{enumerate}
\end{lem}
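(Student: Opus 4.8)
The plan is to reduce the statement to Lemma~\ref{majoreps0} by interposing a short initial stage in which the outbreak grows from $\lceil\log n\rceil$ to $\Theta(n)$ infectives while the fraction susceptible stays close to $s_0$. Fix a small $\epsilon>0$ and put $\sigma^{(n)}=\inf\{t\ge0:I^{(n)}(t)\ge\lceil\epsilon n\rceil\}$. I would first prove the \emph{Stage~A claim}: for every sufficiently small $\epsilon>0$, as $n\to\infty$,
\[
{\rm P}\big(\sigma^{(n)}<u^{(n)}_1\big)\to1,\qquad \sigma^{(n)}\convp0,\qquad \bar I^{(n)}(\sigma^{(n)})\to\epsilon,
\]
and, with probability tending to one, $\bar S^{(n)}(t)\in[s_0-C\epsilon,\,s_0+\epsilon]$ for all $t\in[0,\sigma^{(n)}]$, where $C=C(s_0,R_0)$ is a constant. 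Granting this, on the event $\{\sigma^{(n)}<u^{(n)}_1\}$ I would restart the homogeneous Markov chain $\{(S^{(n)}(t),I^{(n)}(t))\}$ at the stopping time $\sigma^{(n)}$ and apply Lemma~\ref{majoreps0} to the remaining outbreak. The restarted process has no importation of infectives and susceptible births at rate $\mu n(1-\kappa_n)$, exactly as required, and its (random) initial state lies, with probability tending to one, in the compact set $K_\epsilon=[s_0-C\epsilon,\,s_0+\epsilon]\times\{\epsilon\}$, which for small $\epsilon$ is contained in $\{s>R_0^{-1}\}\times\{i>0\}$.

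The proof of Lemma~\ref{majoreps0} is uniform over initial conditions ranging over such a compact set --- the density-dependent law of large numbers (Pollett's theorem) invoked there holds uniformly over compact sets of initial data, and $s_\infty$ is uniformly continuous on $K_\epsilon$ --- so it yields $u^{(n)}_1-\sigma^{(n)}\convp0$ and, using that $s_\infty(\cdot,\epsilon)$ is strictly decreasing (implicit differentiation of $s_\infty=s_0{\rm e}^{-R_0(s_0+\epsilon-s_\infty)}$ gives $\partial s_\infty/\partial s_0=s_\infty(1-R_0s_0)/\big(s_0(1-R_0s_\infty)\big)<0$ since $R_0s_0>1>R_0s_\infty$),
\[
\lim_{n\to\infty}{\rm P}\Big(\bar S^{(n)}(u^{(n)}_1)\in\big[s_\infty(s_0+\epsilon,\epsilon)-\epsilon,\ s_\infty(s_0-C\epsilon,\epsilon)+\epsilon\big]\Big)=1.
\]
Combined with $\sigma^{(n)}\convp0$ this gives $u^{(n)}_1\convp0$, which is part~(ii). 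For part~(i) I then let $\epsilon\downarrow0$: since $s_\infty$ is continuous with $s_\infty(s_0,0)=s_0(1-\tau(s_0))$ by~\eqref{tau}, both endpoints of the displayed interval converge to $s_0(1-\tau(s_0))$, and as $\bar S^{(n)}(u^{(n)}_1)$ does not depend on $\epsilon$ we conclude $\bar S^{(n)}(u^{(n)}_1)\convp s_0(1-\tau(s_0))$. (In fact $\bar S^{(n)}(\sigma^{(n)})$ converges to the value at $i=\epsilon$ of the unstable manifold of the equilibrium $(s_0,0)$ of~\eqref{gde0}, which lies on the trajectory through $(s_0(1-\tau(s_0)),0)$, so a sharper Stage~A claim would let one invoke Lemma~\ref{majoreps0} only once; but the cruder bound above suffices.)

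For the Stage~A claim I would run a stopping-time bootstrap together with the birth-and-death comparisons of Lemma~\ref{bdresults}(b), in the manner of the supercritical case of the proof of Lemma~\ref{minoreps}. Fix $\delta_0>0$ small enough that $R_0(s_0-\delta_0)>1$ and set $T^{(n)}=\sigma^{(n)}\wedge\inf\{t:|\bar S^{(n)}(t)-s_0|\ge\delta_0\}\wedge\inf\{t:I^{(n)}(t)=0\}$. On $[0,T^{(n)})$ the per-infective infection and removal rates are $\lambda_n\bar S^{(n)}(t)\in[\lambda_n(s_0-\delta_0),\lambda_n(s_0+\delta_0)]$ and $\gamma_n+\mu$, so $\{I^{(n)}(t)\}$ is sandwiched between the two supercritical linear birth-and-death processes $Z_{\lambda_n(s_0\mp\delta_0),\,\gamma_n+\mu,\,\lceil\log n\rceil}$. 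This yields three facts. First, the lower bounding process reaches level $\lceil\epsilon n\rceil$ in $o_p(1)$ time --- by Lemma~\ref{bdresults}(b)(iv) together with the fact that a supercritical birth-and-death process has current size of the same order as its cumulative number of births --- so $T^{(n)}\convp0$. Secondly, the extinction probability of that process is $\big(R_0(s_0-\delta_0)+o(1)\big)^{-\lceil\log n\rceil}\to0$, so with probability tending to one $I^{(n)}$ reaches $\lceil\epsilon n\rceil$ before hitting $0$ and before $\bar S^{(n)}$ leaves $[s_0-\delta_0,s_0+\delta_0]$; in particular $\sigma^{(n)}=T^{(n)}<u^{(n)}_1$. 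Thirdly, observing $\{I^{(n)}(t)\}$ at its jump times gives a walk on $\mathbb{Z}$ whose up-probability on $[0,T^{(n)})$ is at least $p_n:=\lambda_n(s_0-\delta_0)/\big(\lambda_n(s_0-\delta_0)+\gamma_n+\mu\big)$, which is bounded away from $1/2$; its first-passage time from $\lceil\log n\rceil$ to $\lceil\epsilon n\rceil$ is stochastically dominated by that of the homogeneous walk with up-probability $p_n$, a sum of independent increments of mean $(2p_n-1)^{-1}$ and bounded variance, hence is at most $C\epsilon n$ with probability tending to one, so the number of infections occurring on $[0,T^{(n)})$ is at most $C\epsilon n$ with probability tending to one. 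Since $T^{(n)}\convp0$, the numbers of susceptible births and deaths on $[0,T^{(n)}]$ are $o_p(n)$, so, for $\epsilon$ small relative to $\delta_0$, with probability tending to one $\bar S^{(n)}$ indeed stays within $\delta_0$ of $s_0$ on $[0,T^{(n)}]$ and satisfies $\bar S^{(n)}(t)\in[s_0-C\epsilon-o_p(1),\,s_0+o_p(1)]$ there; this closes the bootstrap ($T^{(n)}=\sigma^{(n)}$) and establishes the Stage~A claim.

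Finally, parts~(i) and~(ii) give part~(iii). Because $u^{(n)}_1\convp0$, the total numbers of susceptible births and deaths during $[0,u^{(n)}_1]$ are $o_p(n)$, so, uniformly over $t\in[0,u^{(n)}_1]$, $\bar S^{(n)}(t)\ge\bar S^{(n)}(u^{(n)}_1)-o_p(1)$ and $\bar S^{(n)}(t)\le\bar S^{(n)}(0)+o_p(1)$; hence $c^{(n)}_1=\bar S^{(n)}(u^{(n)}_1)+o_p(1)\convp s_0(1-\tau(s_0))$ and $\tilde c^{(n)}_1=\bar S^{(n)}(0)+o_p(1)\convp s_0$. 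The main obstacle is the Stage~A analysis: the early, sub-macroscopic phase of the outbreak is outside the scope of the law-of-large-numbers arguments used elsewhere, and one must control simultaneously --- through the stopping-time bootstrap above --- that the outbreak avoids extinction, reaches $\Theta(n)$ infectives in negligible time, and depletes only an $O(\epsilon)$ fraction of susceptibles en route, with Lemma~\ref{bdresults}(b) and random-walk concentration doing the quantitative work; by comparison, matching Lemma~\ref{majoreps0} to the random restart point requires only the routine observation that that lemma's proof is uniform over compact sets of initial conditions.
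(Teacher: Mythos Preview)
Your proposal is correct and follows essentially the same two-stage strategy as the paper: a short Stage~A in which the outbreak grows from $\lceil\log n\rceil$ to $\Theta(n)$ infectives while $\bar S^{(n)}$ stays near $s_0$, followed by an application of Lemma~\ref{majoreps0}, and then letting the small parameter tend to $0$. The only notable difference is the choice of Stage~A stopping time: you stop when the \emph{current} number of infectives reaches $\epsilon n$, whereas the paper stops at $\tau^{(n)}_\theta=\inf\{t:B^{(n)}(t)\ge\theta s_0 n\}$, the first time \emph{cumulative} infections reach $\theta s_0 n$. The paper's choice directly bounds the drop in $\bar S^{(n)}$ (so $\bar S^{(n)}(\tau^{(n)}_\theta)\convp s_0(1-\theta)$ exactly) and shifts the work to bounding $\bar I^{(n)}(\tau^{(n)}_\theta)$, which it does via Nerman's ratio theorem; your choice pins down $\bar I^{(n)}(\sigma^{(n)})\to\epsilon$ exactly and shifts the work to bounding cumulative infections on $[0,\sigma^{(n)}]$, which you do via the random-walk first-passage argument. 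Correspondingly, the paper sandwiches $\bar S^{(n)}(u^{(n)}_1)$ using monotonicity of $s_\infty$ in $i_0$, while you use monotonicity in $s_0$ (which you correctly verify) together with uniformity of Lemma~\ref{majoreps0} over compact initial data. Both routes work; the paper's is marginally cleaner because the bootstrap closes without the random-walk step, but your version avoids invoking Nerman's theorem.
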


\begin{proof}
Fix $\theta \in (0,1)$ such that $s_0(1-3\theta)R_0>1$.  Then, whilst $\bar S^{(n)}(t) \ge s_0(1-2\theta)$, $\{I^{(n)}(t):t \ge 0\}$ is bounded below by the birth-and-death process $Z_{\tilde{\alpha}_n(\theta),\beta_n,\left \lceil{\log n} \right \rceil}$, where $\tilde{\alpha}_n(\theta)=\lambda_n s_0 (1-2\theta)$ and $\beta_n=\gamma_n+\mu$.  Now
$\tilde{\alpha}_n(\theta)/\beta_n \to R_0s_0(1-2\theta)$ as $n \to \infty$, so, for all sufficiently large $n$, $Z_{\tilde{\alpha}_n(\theta),\beta_n,\left \lceil{\log n} \right \rceil}$ is in turn bounded below by $Z_{\alpha_n(\theta),\beta_n,\left \lceil{\log n} \right \rceil}$, where $\alpha_n(\theta)=R_0 s_0 (1-3\theta)\beta_n$.

Recall that $E^{(n)}$ denotes the epidemic process indexed by $n$. For $t>0$, let $B^{(n)}(t)$ be the total number of infections in $E^{(n)}$ during $(0,t]$.  Let $\tau^{(n)}_\theta=\inf\left\{t>0:B^{(n)}(t) \ge \theta s_0 n\right\}$.   Define $\left\{\bar S^{(n)}_0(t):t \ge 0\right\}$ as in the proof of Lemma~\ref{minoreps}.  For $t \ge 0$, let $B^{(n)}_0(t)$ and $D^{(n)}_0(t)$ be the total number of births and deaths, respectively, during $(0,t]$ in $\left\{S^{(n)}_0(t):t \ge 0\right\}$.  As at~\eqref{bdsbounds0}, but note that $\bar S^{(n)}_0(0)$ is different here,
for any $\epsilon >0$ there exists $\hat{t}(\epsilon)>0$ such that
\begin{equation}
\label{bdsbounds}
\lim_{n \to \infty} {\rm P}\left(B_0^{(n)}(\hat{t}(\epsilon))<\frac{n \epsilon}{4}\right)=1 \qquad \mbox{and} \qquad\lim_{n \to \infty} {\rm P}\left(D_0^{(n)}(\hat{t}(\epsilon))<\frac{n \epsilon}{4}\right)=1.
\end{equation}
Also, since $\bar S^{(n)}(0) \convp s_0$ as $n \to \infty$,
\begin{equation}
\label{s0convp}
\lim_{n \to \infty} {\rm P}\left(|\bar S^{(n)}(0)-s_0|<\frac{\epsilon}{2}\right)=1.
\end{equation}

Observe that, if $\tau^{(n)}_\theta \le \hat{t}(\epsilon), \left|\bar S^{(n)}(0)-s_0\right|<\frac{\epsilon}{2},B_0^{(n)}(\hat{t}(\epsilon))<\frac{n \epsilon}{4}$ and $D_0^{(n)}(\hat{t}(\epsilon))<\frac{n \epsilon}{4}$, then 
\begin{equation}
\label{susupper}
\max_{0 \le t \le \tau^{(n)}_\theta} \bar S^{(n)}(t) \le s_0(1-\theta)+\frac{3}{4}\epsilon,
\end{equation}
obtained by making $\bar S^{(n)}(0)$ and $B_0^{(n)}(\hat{t}(\epsilon))$ as large as possible and assuming no susceptible dies during $[0,\tau^{(n)}_\theta]$, and
\begin{equation}
\label{suslower}
\min_{0 \le t \le \tau^{(n)}_\theta} \bar S^{(n)}(t) \ge s_0-\frac{3}{4}\epsilon-n^{-1}B^{(n)}(\tau^{(n)}_\theta),
\end{equation}
obtained by making $\bar S^{(n)}(0)$ as small as possible, $D_0^{(n)}(\hat{t}(\epsilon))$ as large as possible and assuming no susceptible is born during $[0,\tau^{(n)}_\theta]$.

Recall that, whilst $\bar S^{(n)}(t) \ge s_0(1-2\theta)$, $\left\{I^{(n)}(t):t \ge 0\right\}$ is bounded below by the birth-and-death process $Z_{\alpha_n(\theta),\beta_n,\left \lceil{\log n} \right \rceil}$, so $\tau^{(n)}_\theta \le
\hat{\tau}^{(n)}_{\alpha_n,\beta_n,\left \lceil{\log n} \right \rceil}(s_0\theta n)$, provided $\bar S^{(n)}(t) \ge s_0(1-2\theta)$ throughout $[0,\tau^{(n)}_\theta]$.  Now $\hat{\tau}^{(n)}_{\alpha_n,\beta_n,\left \lceil{\log n} \right \rceil}(s_0\theta n) \convp 0$ as  $n \to \infty$, by Lemma~\ref{bdresults}(b)(iv), so ${\rm P}\left(\hat{\tau}^{(n)}_{\alpha_n,\beta_n,\left \lceil{\log n} \right \rceil}(s_0\theta n) < \hat{t}(\epsilon)\right) \to 1$ as $n \to \infty$, for any $\epsilon>0$. Setting $\epsilon=s_0 \theta$ in~\eqref{suslower}, using~\eqref{bdsbounds},~\eqref{s0convp} and noting that $n^{-1}B^{(n)}(\tau^{(n)}_\theta) \convp s_0\theta$ as n $ \to \infty$, shows that
\[
\lim_{n \to \infty} {\rm P}\left(\min_{0 \le t \le \tau^{(n)}_\theta} \bar S^{(n)}(t) \ge  s_0(1-2\theta)\right)=1,
\]
so
\begin{equation}
\label{tauthetanconv}
\tau^{(n)}_\theta \convp 0 \quad\mbox{as } n \to \infty.
\end{equation}
Further, since for any $\epsilon > 0$, ${\rm P}\left(\tau^{(n)}_\theta < \hat{t}(\epsilon)\right) \to 1$ as $n \to \infty$, it follows from~\eqref{bdsbounds}-\eqref{suslower} that, for any $\epsilon>0$,
\begin{equation*}
\lim_{n \to \infty}{\rm P}\left(s_0(1-\theta)-\epsilon < \bar S^{(n)}(\tau^{(n)}_\theta)<s_0(1-\theta)+\epsilon\right)=1,
\end{equation*}
so
\begin{equation}
\label{Sntauconvp}
\bar S^{(n)}(\tau^{(n)}_\theta) \convp s_0(1-\theta) \quad \mbox{as } n \to \infty.
\end{equation}

It is straightforward to couple the jump processes of $\left\{(I^{(n)}(t),B^{(n)}(t)): t \ge 0\right\}$ and\\ $\left\{Z_{\alpha_n(\theta),\beta_n,\left \lceil{\log n} \right \rceil}(t),B_{\alpha_n(\theta),\beta_n,\left \lceil{\log n} \right \rceil}(t):t \ge 0\right\}$ to show that $I^{(n)}(\tau^{(n)}_\theta)\sge
Z_{\alpha_n(\theta),\beta_n,\left \lceil{\log n} \right \rceil}(\hat{\tau}^{(n)}_\theta)$, where $\hat{\tau}^{(n)}_\theta=\hat{\tau}^{(n)}_{\alpha_n,\beta_n,\left \lceil{\log n} \right \rceil}(s_0\theta n)$
and $\sge$ denotes stochastically greater than.  
Further, recalling that $Z_{\alpha_n(\theta),\beta_n,\left \lceil{\log n} \right \rceil}$ has the same distribution as $\left\{Z_{R_0(1-3\theta),1,\left \lceil{\log n} \right \rceil}(\beta_n t):t \ge 0\right\}$, it follows using~\cite{Nerman81},\newline Theorem 5.4, that
\[
\frac{Z_{\alpha_n(\theta),\beta_n,\left \lceil{\log n} \right \rceil}(\hat{\tau}^{(n)}_\theta)}{B_{\alpha_n(\theta),\beta_n,\left \lceil{\log n} \right \rceil}(\hat{\tau}^{(n)}_\theta)} \convas 1-\frac{1}{R_0 s_0(1-3\theta)}\quad \mbox{as } n \to \infty.
\]
Thus, since $n^{-1} B_{\alpha_n(\theta),\beta_n,\left \lceil{\log n} \right \rceil}(\hat{\tau}^{(n)}_\theta)\convp s_0 \theta$ as $n \to \infty$,
\begin{equation}
\label{inflower}
\lim_{n \to \infty} {\rm P}\left(\bar I^{(n)}(\tau^{(n)}_\theta) > i_-(\theta)\right)=1,
\end{equation}
where
\[
i_-(\theta)=\left[1-\frac{2}{R_0 s_0(1-3\theta)}\right]s_0\theta.
\]
A similar argument using an upper bounding birth-and-death process yields that
\begin{equation}
\label{infupper}
\lim_{n \to \infty} {\rm P}\left(\bar I^{(n)}(\tau^{(n)}_\theta) <i_+(\theta)\right)=1,
\end{equation}
where
\[
i_+(\theta)=\left[1+\frac{2}{R_0 s_0(1-3\theta)}\right]s_0\theta.
\]

Exploiting the Markov property of $\left\{\left(S^{(n)}(t),I^{(n)}(t)\right):t \ge 0\right\}$, \eqref{Sntauconvp}-\eqref{infupper}
and Lemma~\ref{majoreps0}(i) imply that, for any $\epsilon>0$,
\begin{equation*}
\lim_{n \to \infty}{\rm P}\left(s_\infty(s_0(1-\theta),i_-(\theta))-\epsilon<\bar S^{(n)}(u^{(n)}_1)<s_\infty(s_0(1-\theta),i_+(\theta))+\epsilon\right)=1.
\end{equation*}
Letting $\theta \downarrow 0$, noting that $i_-(0+)=i_+(0+)=0$ and using the continuity properties of $s_\infty$, yield
that, for any $\epsilon>0$,
\[
\lim_{n \to \infty}{\rm P}\left(\left|\bar S^{(n)}(u^{(n)}_1)-s_0(1-\tau(s_0))\right|<\epsilon\right)=1,
\]
proving part (i) of the lemma.  Part (ii) follows immediately using~\eqref{tauthetanconv},\newline ~\eqref{Sntauconvp},~\eqref{infupper}
and Lemma~\ref{majoreps0}(ii).  Part (iii) is an easy concequence of parts (i) and (ii) and~\eqref{bdsbounds}.
\end{proof}

\begin{proof}[Proof of Lemma~\ref{convd}]
The lemma follows easily by induction using Lemmas~\ref{minoreps} and Lemmas~\ref{majoreps}. First note Lemma~\ref{minoreps} (i) and (iii) imply that $t^{(n)}_1|\eta \convD t_1$ as $n \to \infty$, and Lemma~\ref{minoreps} (i) and (ii) imply that~\eqref{susboundcd}
holds for $k=0$ and $\bar S^{(n)}(t^{(n)}_1-)|\eta \convp \tilde{s}_1$ as $n \to \infty$. Lemma~\ref{majoreps} (ii) then yields that
$u^{(n)}_1|\eta \convD t_1$ as $n \to \infty$, Lemma~\ref{majoreps} (i) yields that $s^{(n)}_1|\eta \convD s_1$ as $n \to \infty$,  and Lemma~\ref{majoreps} (iii) yields that $c^{(n)}_1 |\eta \convD s_1$ and $\tilde{c}^{(n)}_1 |\eta \convD \tilde{s}_1$ as $n \to \infty$.  Now $\left\{\left(S^{(n)}(t),I^{(n)}(t)\right):t \ge 0\right\}$ is Markov, so, since $\bar S^{(n)}(u^{(n)}_1)|\eta \convp s_1$ as $n \to \infty$, the above argument can be repeated for $k=2,3,\cdots$.  Part (iii) is immediate, since $\{t_1,t_2,\cdots\} \subseteq \{r_1,r_2,\cdots\}$, where $r_1,r_2,\cdots$ are the times of the points in $\eta$.
\end{proof}

\section{Discussion}\label{disc}
In the paper it is  proved that for an SIR epidemic in a dynamic population (whose size fluctuates around $n$), in which there is importation of infectives at a constant rate, the normalised process of susceptibles converges to a regenerative process $S$ as $n\to\infty$. Further, properties of the limiting process $S$ are derived.  The asymptotic regime considered is for the situation when the rate of importation of infectives $\kappa \mu$ and the basic reproduction number $R_0$ remain constant with $n$, whereas the average length of the infectious period $1/\gamma_n$ converges to 0 faster than $1/\log n$ (in most real-life epidemics, the ratio of average infectious period and average lifetime lies between $10^{-4}$ and $10^{-3}$).

Other asymptotic regimes could of course also be considered. For example, if the importation rate of infectives grows with $n$, then there will always be infectives present in the population resembling an endemic situation. If the duration of an infectious period remains fixed (or at least grows slower than $\log n$), then the duration of a single outbreak will be long and the typical time horizon will not go beyond the first outbreak. A more complicated and interesting scenario seems to be for the asymptotic situation treated in the current paper, but where the epidemic is initiated with a fraction $1/R_0$ of the population susceptible and a large enough number of infectives. It then seems as if an endemic equilibrium will stabilize, but determining and proving this rigorously remains an open problem. For large but finite $n$, it is possible for the process to get stuck in an endemic situation near the end of a major outbreak (with states similar to those just described). Eventually the epidemic leaves this endemic state and returns to the behaviour of the limiting process. In Figure~\ref{endemic-simul} such a simulation is presented. The parameter values are $n=100,000$, $\mu=1/75$, $\kappa=1$ (so the importation rate of infectives is one per $75$ years),
$R_0=2$ and $\gamma=2$ (so the average infectious period is $6$ months).  The left and right plots show the fraction of the population that are susceptible and infective, respectively, as functions of time.  A quasi-endemic phase lasts roughly from years $1,300$ to $3,000$.  Observe that major outbreaks become smaller prior to the process entering the quasi-endemic phase and fluctuations in the number of infectives increase in amplitude prior to the end of the quasi-endemic phase.
\begin{figure}
\begin{center}
\resizebox{\hfigwidth}{!}{\includegraphics{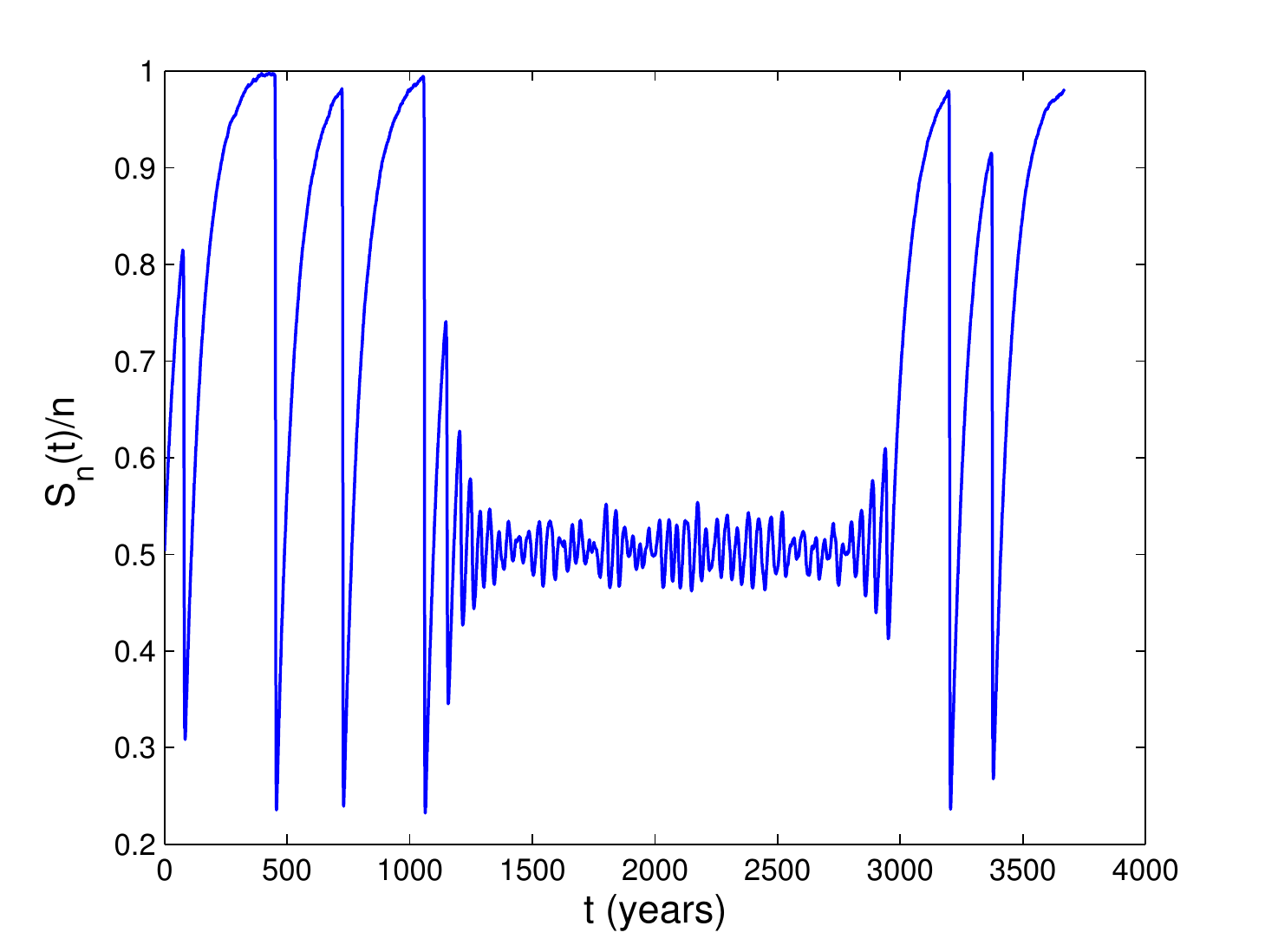}}
\resizebox{\hfigwidth}{!}{\includegraphics{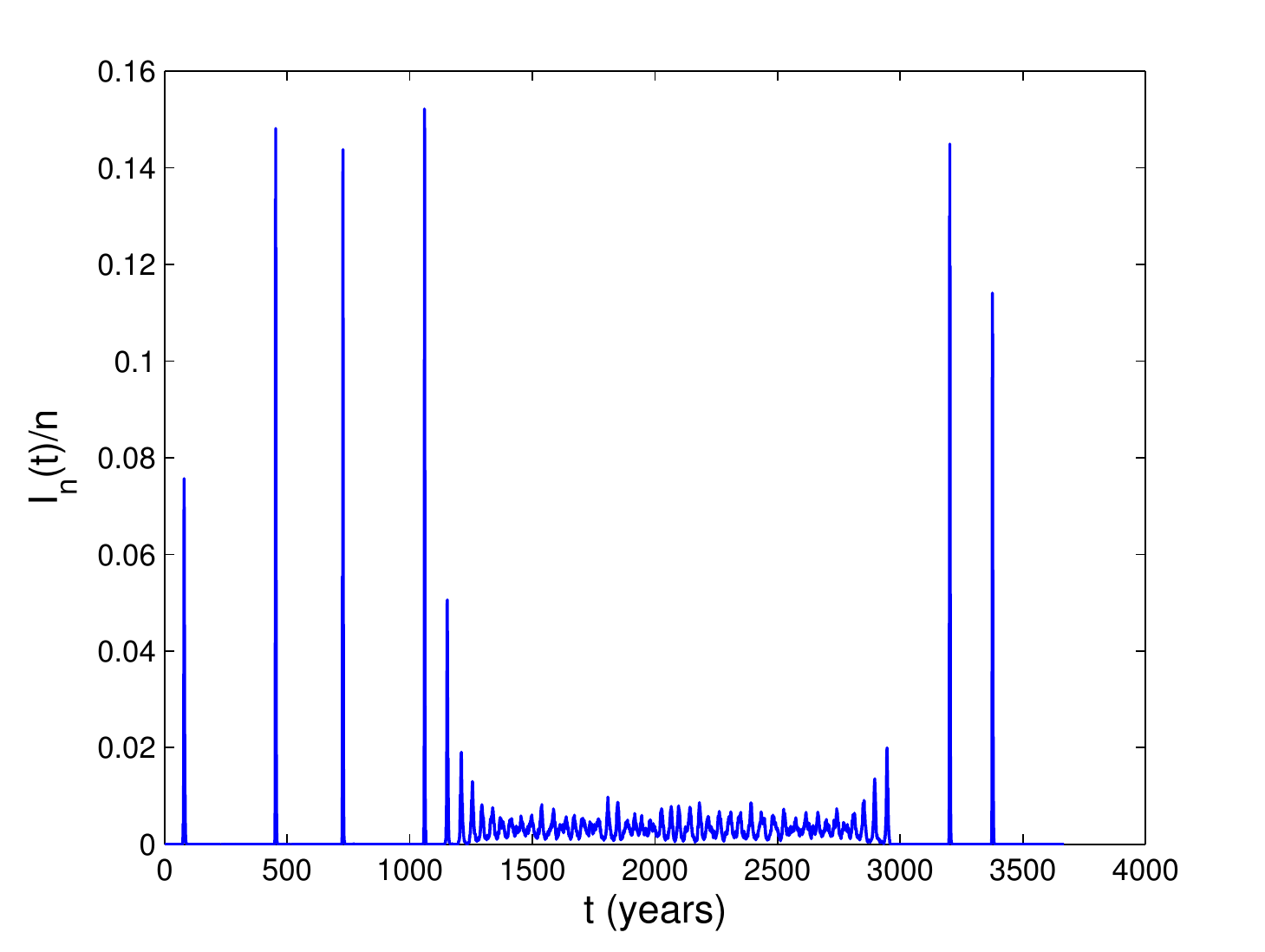}}
\end{center}
\caption{Plot of an epidemic exhibiting  quasi-endemic behaviour. }
\label{endemic-simul}
\end{figure}
Beside studying other asymptotic regimes, it could be of interest to increase realism in the model, for example, by relaxing exponential distributions of infectious periods and lifetimes and allowing for a latent state (cf.~\cite{AB00b}, who consider epidemics with importation of susceptibles only) or by having some population structure, such as network or households (see the challenges in~\cite{PBBEHIT15} and ~\cite{BBHIMPS15}).

\appendix
\section{Proof of Lemma~\ref{bdresults}}

\label{appendix1}

Suppose that $a<1$.  Then, for $t>0$,
\begin{eqnarray*}
\lim_{n \to \infty}{\rm P}\left(\tau_{\alpha_n,\beta_n,1}(0)>t\right)&=&
\lim_{n \to \infty}{\rm P}\left(Z_{\alpha_n,\beta_n,1}(t)\ge1\right)\\
&\le&\lim_{n \to \infty}{\rm E}\left[Z_{\alpha_n,\beta_n,1}(t)\right]\\
&=&\exp\left(-(1-a)\beta_n t\right) \to 0 \quad \mbox{as } n\to \infty,
\end{eqnarray*}
since $\beta_n \to \infty$ as $n \to \infty$, proving part (a)(i).

Observe that, for any $k$, $\{Z_{\alpha_n,\beta_n,k}(t):t\ge 0\}\eqD \{Z_{a,1,k}(\beta_n t):t\ge 0\}$, where $\eqD$ denotes equal in distribution.  It follows that $\tau_{\alpha_n,\beta_n,1}(\log n) \eqD \frac{1}{\beta_n} \tau_{a,1,1}(\log n)$, so
\begin{eqnarray*}
\lim_{n\to\infty}{\rm P}\left(\tau_{\alpha_n,\beta_n,1}(\log n)= \infty\right)&=&
\lim_{n\to\infty}{\rm P}\left(\tau_{a,1,1}(\log n)= \infty\right)\\
&=&1,
\end{eqnarray*}
since $Z_{a,1,1}$ is subcritical, proving part (a)(ii).

For any $t>0$,
\begin{eqnarray*}
{\rm P}\left(\tau_{\alpha_n,\beta_n,\left \lceil{c n}\right \rceil}>t\right)&=&
{\rm P}\left(Z_{\alpha_n,\beta_n,\left \lceil{c n}\right \rceil}(t)\ge 1 \right)\\
&\le&{\rm E}\left[Z_{\alpha_n,\beta_n,\left \lceil{c n}\right \rceil}(t)\right]\\
&=&\left \lceil{c n}\right \rceil \exp\left(-(1-a)\beta_n t\right)\\
&\to&0\quad\mbox{as } n \to \infty,
\end{eqnarray*}
since $\log n/\beta_n \to 0$ as $n \to \infty$, proving part (a)(iii).

Suppose that $a>1$.  Note that, since $\{Z_{\alpha_n,\beta_n,1}(t):t\ge 0\}\eqD \{Z_{a,1,1}(\beta_n t):t\ge 0\}$,
coupled realisations of $\{Z_{\alpha_n,\beta_n,1}:n=1,2,\cdots\}$ can be obtained by setting $Z_{\alpha_n,\beta_n,1}(t)=Z_{a,1,1}(\beta_n t)$ $(n=1,2,\cdots;t \ge 0)$.  Now, see e.g.~\cite{AN72}, page 112, there exists a random variable $W\ge 0$, satisfying $W=0$ if and only if $Z_{a,1,1}$ goes extinct, such that
\begin{equation}
\label{bdconv0}
{\rm e}^{-(a-1)t}Z_{a,1,1}(t) \convas W \quad \mbox{as } t \to \infty.
\end{equation}
Note that, for any $x \ge 0$, $\tau_{\alpha_n,\beta_n,1}(x)=\beta_n^{-1}\tau_{a,1,1}(x)$. If $W=0$, then $\tau_{a,1,1}(0)<\infty$, so $\tau_{\alpha_n,\beta_n,1}(0)\downarrow 0$ as $n \to \infty$, and
$\max_{t\ge 0} Z_{a,1,1}(t)<\infty$, so $\tau_{a,1,1}(x)=\infty$ for all sufficiently large $x$, whence $\tau_{\alpha_n,\beta_n,1}(\log n)=\infty$ for all sufficiently large $n$.  If $W>0$, then $\tau_{a,1,1}(0)=\infty$, so $\tau_{\alpha_n,\beta_n,1}(0)=\infty$.  Also, for any $t>0$,
\begin{eqnarray*}
Z_{\alpha_n,\beta_n,1}(t)\ge \log n &\iff& Z_{a,1,1}(\beta_n t) \ge \log n\\
&\iff& {\rm e}^{-(a-1) \beta _n t}Z_{a,1,1}(\beta_n t) \ge {\rm e}^{-(a-1) \beta _n t} \log n.
\end{eqnarray*}
Now ${\rm e}^{-(a-1) \beta _n t} \log n \to 0$ as $n \to \infty$, since $\lim_{n \to \infty} \log n /\beta_n=0$, so, since $W>0$,~\eqref{bdconv0} implies that $Z_{\alpha_n,\beta_n,1}(t)>\log n$ for all sufficiently large $n$.  This holds for any $t>0$, so $\tau_{\alpha_n,\beta_n,1}(\log n) \to 0$ as $n \to \infty$.  Part (b)(i) follows since ${\rm P}(W=0)=\frac{1}{a}$ and part (b)(ii) also follows, indeed we have shown that, under the coupling, $\min\left(\tau_{\alpha_n,\beta_n,1}(\log n), \tau_{\alpha_n,\beta_n,1}(0)\right)\convas 0$ as $n \to \infty$.

Further, it follows using~\cite{Nerman81}, Theorem 5.4, that
\begin{equation}
\label{bdconvb}
{\rm e}^{-(a-1)t}B_{a,1,1}(t) \convas \frac{a}{a-1}W \quad \mbox{as } t \to \infty,
\end{equation}
where $W$ is the same random variable as in~\eqref{bdconv0}. If $W=0$, then $B_{a,1,1}(\infty)<\infty$, so $B_{\alpha_n,\beta_n,1}(\tau_{\alpha_n,\beta_n,1}(0))=B_{a,1,1}(\infty)<n^{\frac{1}{3}}$ for all sufficiently large $n$.  If $W>0$, then~\eqref{bdconv0} and~\eqref{bdconvb} imply that $\lim_{t \to \infty}B_{a,1,1}(t)/Z_{a,1,1}(t) =\frac{a}{a-1}$, so, since $\tau_{a,1,1}(\log n) \to \infty$ as $n \to \infty$, $\lim_{n \to \infty}B_{\alpha_n,\beta_n,1}(\tau_{\alpha_n,\beta_n,1}(\log n))/\log n = \frac{a}{a-1}$, whence $B_{\alpha_n,\beta_n,1}(\tau_{\alpha_n,\beta_n,1}(\log n))<n^{\frac{1}{3}}$ for all sufficiently large $n$.  Part (b)(iii) now follows.

A similar argument to the above shows that, if $W>0$, then for any $c>0$, $\tau_{\alpha_n,\beta_n,1}(cn)\to 0$ as $n \to \infty$.  Thus $\tau_{\alpha_n,\beta_n,\left \lceil{\log n}\right \rceil}(cn)\convp 0$ as $n \to \infty$, since
$Z_{\alpha_n,\beta_n,\left \lceil{\log n}\right \rceil}$ is the sum of $\left \lceil{\log n}\right \rceil$ independent copies of $Z_{\alpha_n,\beta_n,1}$.  Part (b)(iv) follows since
$\tau_{\alpha_n,\beta_n,\left \lceil{\log n}\right \rceil}(cn+1)\sge \hat{\tau}_{\alpha_n,\beta_n,\left \lceil{\log n}\right \rceil}(cn)$.

\section*{Acknowledgements}


T.~B.~is supported by Riksbankens jubilieumsfond, grant P12-0705:1
and P.~T.~is supported by Vetenskapsr{\aa}det (Swedish Research Council), project 20105873\\
F.~B.~and P.~T.~gratefully acknowledge
support from the Isaac Newton Institute for Mathematical Sciences,
Cambridge, where we held Visiting Fellowships under the Infectious Disease
Dynamics programme and its follow-up meeting, during which this work was initiated.

\end{document}